\documentclass{amsart}

\usepackage{amsmath}
\usepackage{graphicx}
\usepackage{url}
\usepackage{mathrsfs}

\newtheorem {prop} {Proposition}

\newtheorem {conj} {Conjecture}
\newtheorem* {coup} {Move}

\def\S{\mathbb{S}}
\def\T{\mathbb{T}}
\def\P{\mathbb{P}}

\title{Sprouts game on compact surfaces}
\author{Julien Lemoine - Simon Viennot}

\begin{document}

\begin{abstract}
Sprouts is a two-player topological game, invented in 1967 by Michael Paterson and John Conway. The game starts with $p$ spots drawn on a sheet of paper, and lasts at most $3p-1$ moves: the player who makes the last move wins.\par
Sprouts is a very intricate game and the best known manual analysis only achieved to find a winning strategy up to $p=7$ spots. Recent computer analysis reached up to $p=32$.\par
The standard game is played on a plane, or equivalently on a sphere. In this article, we generalize and study the game on any compact surface.\par
First, we describe the possible moves on a compact surface, and the way to implement them in a program. Then, we show that we only need to consider a finite number of surfaces to analyze the game with $p$ spots on any compact surface: if we take a surface with a genus greater than some limit genus, then the game on this surface is equivalent to the game on some smaller surface. Finally, with computer calculation, we observe that the winning player on orientable surfaces seems to be always the same one as on a plane, whereas there are significant differences on non-orientable surfaces.
\end{abstract}

\maketitle

\section{Introduction}

\textit{Sprouts} is a two-player game, which needs only a sheet of paper and a pen to enjoy. Rules are extremely simple, and can be found for example in Martin Gardner's article of 1967 \cite{mg67} (when the game was invented).\par
The player who makes the last move wins, and since the number of moves of a game with $p$ spots is finite (at most $3p-1$), the game cannot end in a draw. It implies that one of the players has a winning strategy.\par
Most of the interest of Sprouts is to find the player having a winning strategy, but it is made difficult by the game's complexity. The first manual analysis only achieved to find a winning strategy up to $p=6$ spot, and it necessitated a lot of reasoning to consider all the possible cases. The first program of Sprouts enabled Applegate, Jacobson and Sleator \cite{ajs91} in 1991, to extend this analysis up to $p=11$ spots, and to formulate the following conjecture:

\begin{conj}
The first player has a winning strategy in games starting with $p$ spots, if and only if $p$ is equal to 3, 4 or 5 modulo 6.
\end{conj}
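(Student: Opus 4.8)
The plan is to treat Sprouts as a finite impartial game under the normal-play convention, so that deciding the winner from $p$ initial spots reduces to deciding whether the starting position $S_p$ (the $p$ isolated live spots) is a $\mathcal{P}$-position or an $\mathcal{N}$-position, equivalently whether its Sprague--Grundy value $\mathcal{G}(S_p)$ vanishes. First I would fix a combinatorial encoding of positions: the curves drawn so far embed a graph in the sphere, and each face of this embedding, together with the live spots on its boundary, forms an independent subgame, so that a position is the \emph{disjunctive sum} of its faces. Consequently $\mathcal{G}$ is computed by nim-addition of the face values together with the mex recursion over the legal moves inside each face. This is the engine behind all known computations, and it reduces the conjecture to a statement about the single sequence $\bigl(\mathcal{G}(S_p)\bigr)_{p \ge 1}$, predicted to vanish exactly when $p \equiv 0,1,2 \pmod 6$.

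The heart of the argument would be a \emph{reduction (periodicity) theorem}: a lemma asserting that, for $p$ beyond some explicit bound, the outcome class of $S_{p+6}$ equals that of $S_p$. The natural route is to construct, for the predicted winner, a strategy on $S_{p+6}$ that simulates an optimal strategy on $S_p$ while ``neutralising'' the six extra spots through paired moves that preserve the parity of the total move count. The parity viewpoint is the right one: since the number of lives is $3p$ and each move consumes exactly one net life, a game lasts between $2p$ and $3p-1$ moves, and the winner is whoever steers the final move-count to the favourable parity. One would therefore aim to show that six extra spots can always be absorbed by a pairing that the predicted winner controls, leaving the induced game on the remaining structure unchanged. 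I would first establish such absorption for isolated added spots and then argue that it propagates through the disjunctive-sum decomposition.

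The base of the induction is the finite verification of $\mathcal{G}(S_p)$ for all $p$ up to one full period past the transient guaranteed by the reduction theorem. This part is exactly the kind of computation already carried out to large $p$, so it becomes available as soon as the theoretical bound on the transient is pinned down; it is not where the difficulty lies.

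The step I expect to be the genuine obstacle is the reduction theorem itself. Unlike octal games, where the Guy--Smith periodicity criterion turns a finite computation into a proof valid for all heap sizes, Sprouts has no one-dimensional heap parameter: the reachable positions form a richly branching family whose Grundy values do not obviously stabilise, and there is no known mechanism forcing the extra-spot absorption strategy to survive an optimal opponent who refuses to keep the six added spots separate. Controlling the interaction between the added spots and the rest of the board under adversarial play --- equivalently, proving that the disjunctive-sum value is invariant under the $+6$ operation --- is precisely where every attempt has so far stalled, and it is why the statement remains a conjecture rather than a theorem despite overwhelming computational support.
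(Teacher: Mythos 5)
This statement is presented in the paper as a \emph{conjecture} (due to Applegate, Jacobson and Sleator), supported only by computation: verified up to $p=11$ in 1991 and up to $p=32$ in 2007. The paper offers no proof, and none is known. Your proposal does not close that gap. Everything before your final paragraph is a restatement of known structure --- Sprouts is a finite impartial game under normal play, a position decomposes into its regions as a disjunctive sum, Sprague--Grundy theory applies, and the winner is decided by the parity of the total number of moves. Your final paragraph then concedes that the pivotal step, the ``$+6$ reduction theorem,'' is precisely what you cannot prove. A plan whose central lemma is announced as an obstacle is not a proof with a repairable gap: that lemma \emph{is} the entire content of the conjecture.

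Two concrete reasons why the outlined route does not go through. First, the starting position with $p+6$ spots is a \emph{single region} containing all the spots; it is not a disjunctive sum of the $p$-spot position with six isolated spots, and no pairing strategy can force it to behave like one, because the opponent may always draw lines joining the ``extra'' spots to the original ones, destroying any intended segregation --- the region decomposition only emerges dynamically, under the opponent's partial control. Second, even if some absorption lemma held at the level of outcome classes, an induction through disjunctive sums requires invariance of nimbers, not outcome classes: positions of nimber 1 and nimber 2 are both first-player wins yet behave differently in sums. This is why the sharper conjecture (stated later in this paper for orientable surfaces) is phrased as the nimber being 0 when $p \equiv 0,1,2 \pmod 6$ and 1 otherwise, and the paper exhibits positions whose nimber is highly sensitive to context (e.g.\ changing with the underlying surface). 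Finally, as you yourself note, Sprouts has no analogue of the Guy--Smith periodicity criterion, so no finite computation can be leveraged into a statement for all $p$. Your text is an accurate description of why the problem is hard, but it is a research programme, not a proof.
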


Later computation in 2007 \cite{lv07} proved this conjecture to be true at least up to $p=32$ spots.\par
In this paper, we generalize Sprouts to other surfaces than the simple plane of a sheet of paper. First of all, let us remark that the game is equivalent on the sphere and on the plane. Indeed, if we consider a final Sprouts position after the game was played on a plane, we can ``wrap'' the plane around a sphere (an adequate mapping is the stereographic projection). This means that the exact same game could have been played on the sphere. Conversely, a similar argument shows that any game played on the sphere could have been played on the plane.

\begin{figure}[ht]
\centering
\includegraphics[scale=0.5]{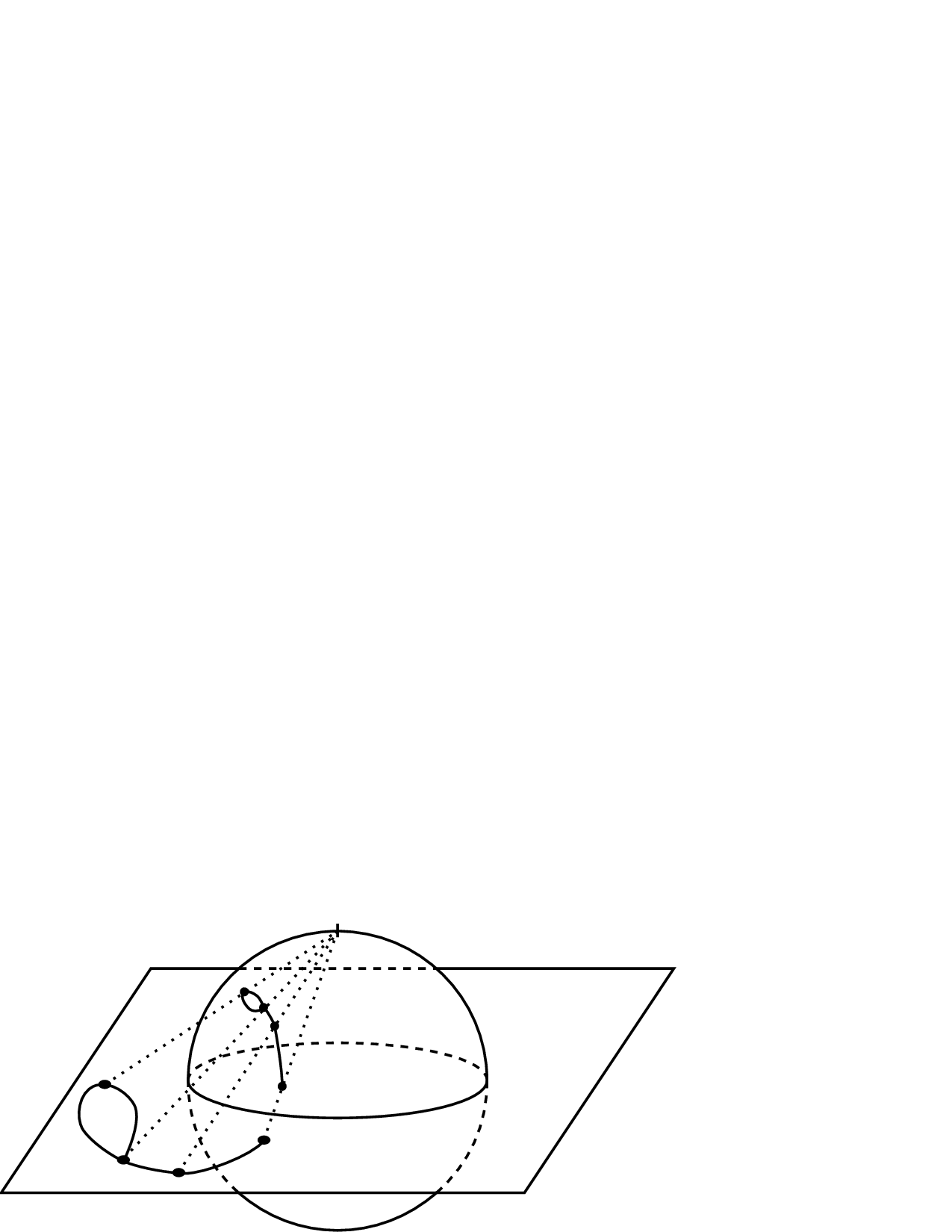}
\caption{Stereographic projection }
\end{figure}

Sprouts natural playground is a surface: if we tried to play Sprouts in a volume, it would be always possible to connect two given spots with a line, and any game would end in exactly $3p-1$ moves. Interestingly, we can consider other surfaces than the sphere and in this paper, we generalize Sprouts to any compact surface. We describe first how to play Sprouts on compact surfaces, by listing all the possible moves, then we give our first results on the winning strategy, obtained by adapting our program of \cite{lv07}. We applied the same programming methods as in \cite{lv07}, simply generalizing some functions to the case of compact surfaces.

\section{Basic notions on compact surfaces}

\subsection{Sprouts}

We call \emph{position} a Sprouts game at a given time: this is a graph $\mathscr{G}$ embedded in a surface $\mathscr{S}$. We call \emph{region} the union set of a connected component of $\mathscr{S}-\mathscr{G}$ and of all the parts of $\mathscr{G}$ touching it. Because of the rule forbidding to cross an existent line, a move is always done inside a given region. After a move, the region remains unchanged, or breaks into two new regions, so that the total number of regions can only increase during a game.\par
Inside a region, we call \emph{boundaries} the connected component of the part of the graph $\mathscr{G}$ touching the considered region. For example, the position of figure \ref{simple} has been obtained from an initial position with 3 spots, $A$, $B$ and $C$. $A$ has been linked to $B$, creating $D$, then $A$ to $D$ creating $E$. There are two regions in this position: the first one has only one boundary, which can be written ``$ADBDE$'', and the second one has two boundaries, ``$ADE$'' and ``$C$''.\par
A \emph{life} is a remaining possibility for a point to be linked to another point: initially, a vertex has 3 lives, and if $k$ lines ($k\leq 3$) are linked to a vertex, it remains $3-k$ lives.

\begin{figure}[ht]
\centering
 \includegraphics[scale=0.5,angle=-90]{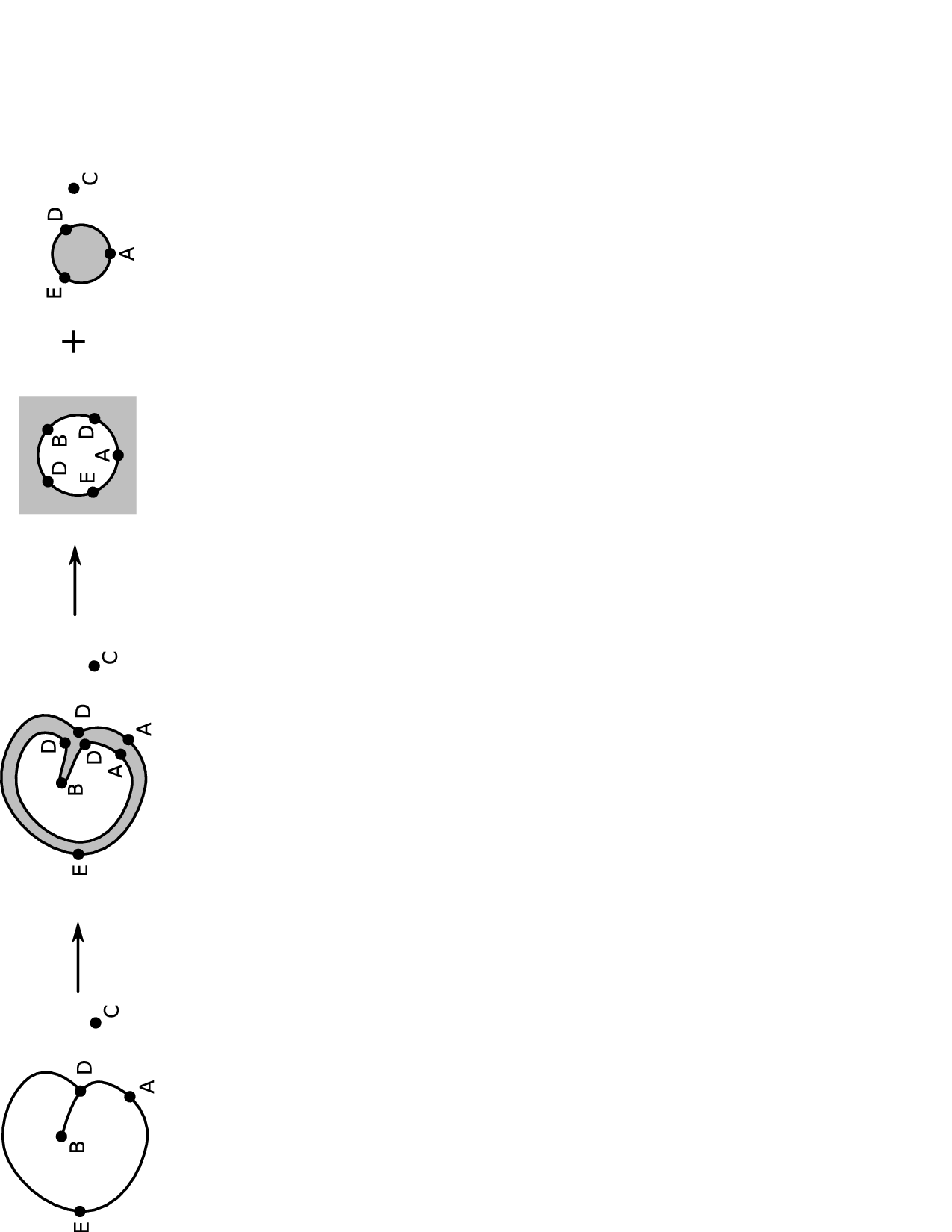}
\caption{A simple position}
\label{simple}
\end{figure}

\subsection{Orientable surfaces}

The sphere will be denoted $\S$, the torus $\T$. The connected sum of two surfaces $\mathscr{S}_1$ and $\mathscr{S}_2$ will be denoted $\mathscr{S}_1 \sharp \mathscr{S}_2$. The connected sum of two tori (or \emph{torus with two holes}) will be denoted $\T^2$, and, more generally, the connected sum of $n$ tori will be denoted $\T^n$, for any $n\geq 1$. To simplify some results, we also use $\T^0=\S$.

\subsection{Non-orientable surfaces}

The real projective plane will be denoted $\P$. The connected sum of two projective planes (the well-known \emph{Klein bottle}) will be denoted $\P^2$, and more generally, the connected sum of $n$ projective planes will be denoted $\P^n$, for any $n\geq 1$.

\subsection{Classification of compact surfaces}
\label{class}
The classification of closed surfaces states that any compact boundaryless surface is homeomorphic to $\T^n$ (for some $n\geq 0$) if the surface is orientable, or to $\P^n$ (for some $n\geq 1$) if the surface is non-orientable \cite{fw99}. Moreover, the connected sum of two surfaces is done in the following manner:

\begin{center}
\begin{tabular}{ rcp{2cm}l }
$\T^m \sharp \T^n$ & $=$ & $\T^{m+n}$ & $(m,n\geq 0)$ \tabularnewline
$\P^m \sharp \P^n$ & $=$ & $\P^{m+n}$ & $(m,n\geq 1)$ \tabularnewline
$\P^m \sharp \T^n$ & $=$ & $\P^{m+2n}$ & $(m\geq 1,\ n\geq 0)$ \tabularnewline
\end{tabular}
\end{center}

\subsection{Euler characteristic}

Let $\chi(\mathscr{S})$ denote the Euler characteristic of the surface $\mathscr{S}$. This characteristic has the following values on compact surfaces:

\begin{center}
\begin{tabular}{ rcp{2cm}l }
$\chi(\T^n)$ & $=$ & $2-2n$ & $(n\geq 0)$ \tabularnewline
$\chi(\P^n)$ & $=$ & $2-n$ & $(n\geq 1)$ \tabularnewline
\end{tabular}
\end{center}

$n$ is called the \emph{genus} of the surface.

If the surface $\mathscr{S}$ is not connected, its Euler characteristic is the sum of Euler characteristics of each connected components. If a surface $\mathscr{S}$ is homeomorphic to a compact surface $\mathscr{S}_C$ with $b$ boundaries, then $\chi(\mathscr{S})=\chi(\mathscr{S}_C)-b$. For example, a surface $\mathscr{S}$ with two connected components, one homeomorphic to a torus with 2 holes, and one homeomorphic to a sphere with 3 boundaries, has the following Euler characteristic: $\chi(\mathscr{S})=(-2)+(2-3)=-3$.

``Gluing'' or ``cutting'' a surface along a boundary doesn't change the Euler characteristic of this surface. For example, when gluing two surfaces $\mathscr{S}_1$ and $\mathscr{S}_2$ to construct their connected sum, we add a boundary to $\mathscr{S}_1$ and a boundary to $\mathscr{S}_2$, then we glue along these boundaries. We obtain: $\chi(\mathscr{S}_1 \sharp \mathscr{S}_2)=(\chi(\mathscr{S}_1)-1)+(\chi(\mathscr{S}_2)-1)$.

\subsection{Plane representation of compact surfaces}

If it is easy to play Sprouts on a plane with a simple sheet of paper, it seems, on the other hand, uncomfortable to use a three-dimensional doughnut to explore the game on the torus. Fortunately, a mathematical tool allows us to represent any compact surface on the plane: the fundamental polygon. The fundamental polygon of the projective plane and of the torus are represented on figure \ref{ppt}. On each polygon, sides with matching labels are pairwise identified, so that the arrows point in the same direction. A single dotted line has been drawn and shows the meaning of each polygon.

\begin{figure}[ht]
\centering
 \includegraphics[scale=0.5]{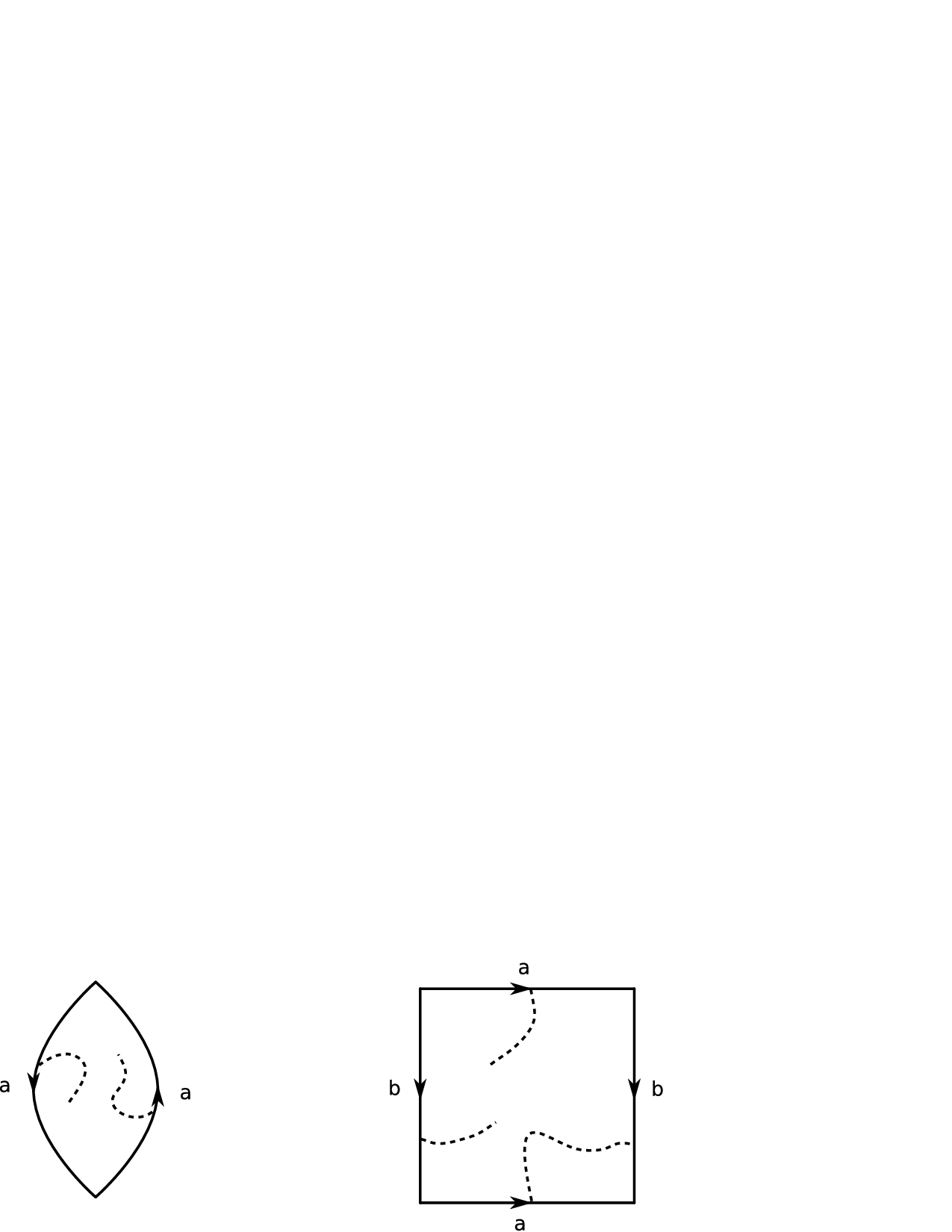}
\caption{Fundamental polygons of the projective plane and the torus}
\label{ppt}
\end{figure}

The represented surface can be effectively constructed by cutting a sheet of paper in the shape of the polygon and gluing sides with matching labels, paying attention to orientation (the arrows must point in the same direction). In the case of non-orientable surfaces, the real construction is however impossible in the three-dimensional space...

A fundamental polygon can be written with a string of characters. Beginning at any vertex, the string is the list of the side's names when going around the whole polygon. For example, the fundamental polygon of $\P$ given in figure \ref{ppt} is written $aa$, and these of $\T$ is  $aba^{-1}b^{-1}$ (beginning at the upper left, and going around clockwise). The letter is $a$ when the side's arrow points in movement direction, and $a^{-1}$ elsewhere.

The fundamental polygon of all other surfaces can be constructed with the following rule: the fundamental polygon of the connected sum of two surfaces is obtained by simply enumerating the two fundamental polygons of the connected surfaces. For example, a fundamental polygon of $\P^2$ is  $aabb$, and a fundamental polygon of $\T^2$ is $aba^{-1}b^{-1}cdc^{-1}d^{-1}$.

The fundamental polygon is not unique for a given surface. For example, $aabb$ is a fundamental polygon for the Klein Bottle $\P^2$, but $abab^{-1}$ is another valid one.

\subsection{Surface related to a region}

When playing Sprouts on a compact surface, all regions are homeomorphic to a compact surface with boundaries. There is in fact a complete equivalence between boundaries considered in the theory of surfaces and boundaries considered in the theory of Sprouts. We call the \textit{surface related to a region} the boundaryless compact surface homeomorphic to the region without its boundaries. It is important to note that a Sprouts game is not affected by the number of boundaries of the surface. For example, we can add boundaries with only dead points, without interfering with the game.

\section{Kind of moves}
\label{types}

In this section, we describe the kind of moves possible on compact surfaces. When playing on the plane, there are only two kinds of moves: the connection of two different boundaries, or the connection of a boundary to itself, breaking the current region into two new regions. But when we generalize to compact surfaces, there are a whole lot of other possible moves.

\subsection{List of possible moves}

We assume that we are playing a move in a region $\mathscr{R}$ of a Sprouts game. The surface related to $\mathscr{R}$ is denoted $\mathscr{S}$.

\begin{coup}[kind I]
Link between two different boundaries.
\end{coup}

This move is just a generalization of the move on a plane. It results in the merging of the two boundaries, and the region $\mathscr{R}$ remains homeomorphic to the same compact surface.

\begin{coup}[kind II]
Link of a boundary to itself.
\end{coup}

When we link a boundary to itself, it creates a loop $\mathscr{L}$ which modifies the topological properties of the region $\mathscr{R}$. In the case of the plane, the region breaks into two new regions, but there are other possibilities on compact surfaces. Let us detail them.

\begin{coup}[kind II.A]
$\mathscr{R}-\mathscr{L}$ has two connected components.
\end{coup}

This move is somewhat similar to the move on the plane. It breaks the region $\mathscr{R}$ into two compact surfaces, whose connected sum is homeomorphic to $\mathscr{R}$.
By inverting the relations of paragraph \ref{class}, we obtain:

\begin{center}
\begin{tabular}{ rcp{2.5cm}p{5cm}l }
$\T^n$ & $\rightarrow$ & $\T^k+\T^{(n-k)}$ & $(n\geq 0,\ 0\leq k\leq n)$ & $(a)$ \tabularnewline
$\P^n$ & $\rightarrow$ & $\P^k+\P^{(n-k)}$ & $(n\geq 2,\ 0 < k < n)$ & $(b)$ \tabularnewline
$\P^n$ & $\rightarrow$ & $\T^k+\P^{(n-2k)}$ & $(n\geq 1,\ 0\leq k < \frac{n}{2})$ & $(c)$ \tabularnewline
\end{tabular}
\end{center}

\begin{figure}[ht]
\centering
 \includegraphics[scale=0.5,angle=-90]{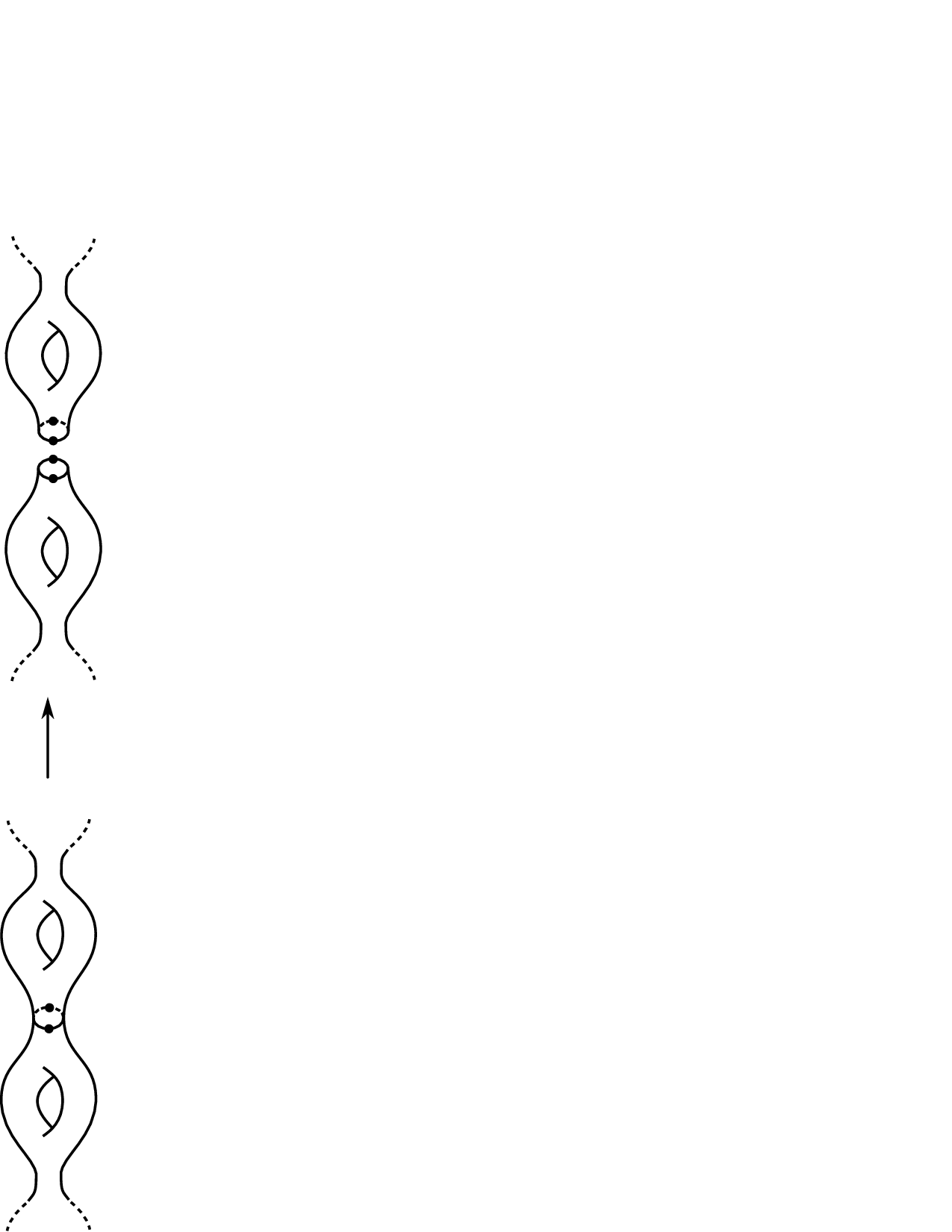}
\caption{Move of kind II.A.a}
\end{figure}

\begin{coup}[kind II.B]
$\mathscr{R}-\mathscr{L}$ has only one connected component.
\end{coup}

The surface related to $\mathscr{R}-\mathscr{L}$ will be denoted $\mathscr{S'}$. There are two sub-cases:

\begin{coup}[kind II.B.1]
Cutting $\mathscr{R}$ along $\mathscr{L}$ creates two boundaries.
\end{coup}

We have the relation $\chi(\mathscr{S})=\chi(\mathscr{S'})-2$, and since cutting an orientable region doesn't change its orientability, we only need to consider the Euler characteristic to list the possible cases:

\begin{center}
\begin{tabular}{ rcp{1.5cm}p{4cm}l }
$\T^n$ & $\rightarrow$ & $\T^{(n-1)}$ & $(n\geq 1)$ & $(a)$ \tabularnewline
$\P^n$ & $\rightarrow$ & $\P^{(n-2)}$ & $(n\geq 3)$ & $(b)$ \tabularnewline
$\P^n$ & $\rightarrow$ & $\T^{\frac{n-2}{2}}$ & $(n=2k, k\geq 1)$ & $(c)$ \tabularnewline
\end{tabular}
\end{center}

This move is achieved by ``breaking'' the handle of a torus in the first two cases, or the neck of a Klein bottle in the last two ones (the second case can be considered in two ways).

\begin{figure}[ht]
\centering
 \includegraphics[scale=0.5]{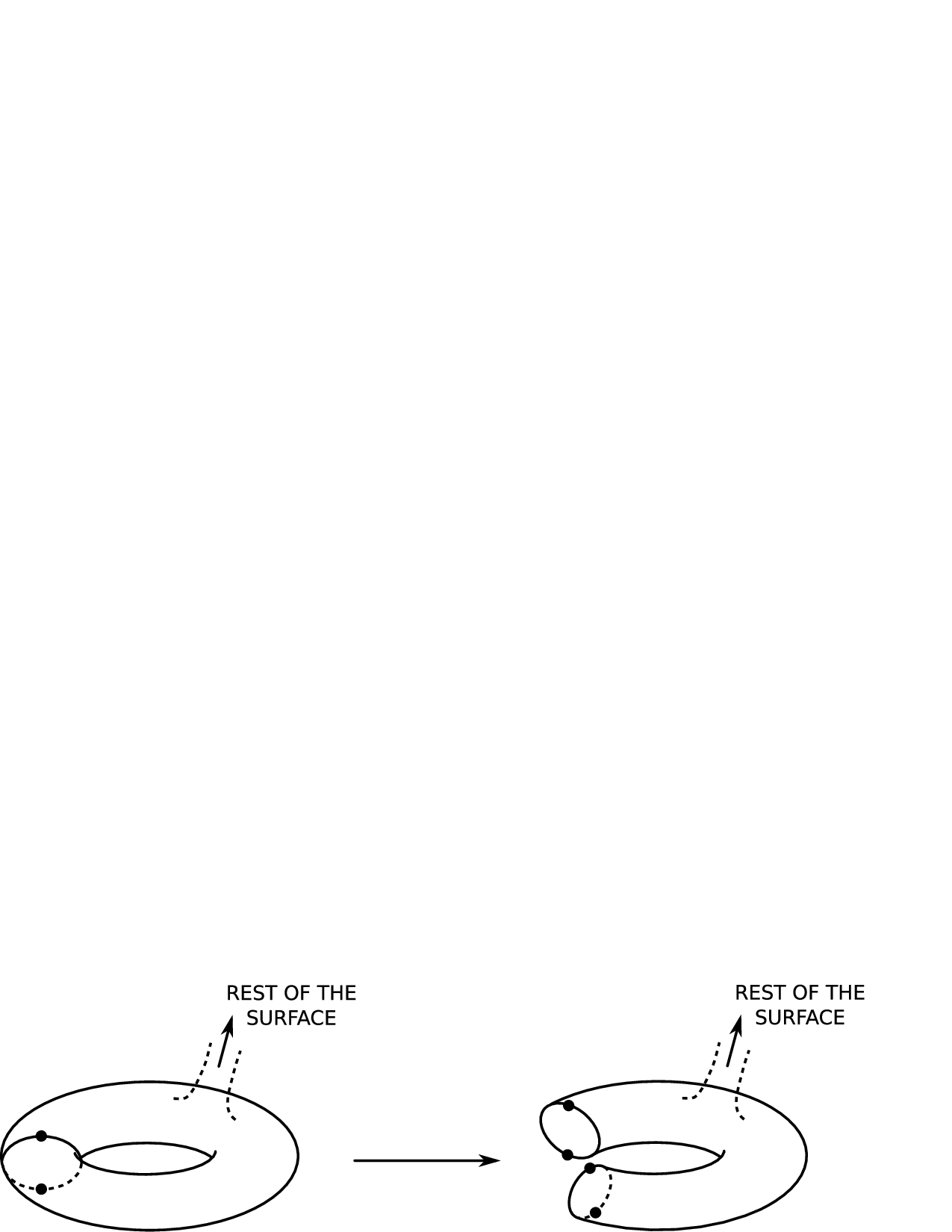}
\caption{Move of kind II.B.1.(a) or II.B.1.(b)}
\end{figure}

\begin{figure}[ht]
\centering
 \includegraphics[scale=0.5,angle=-90]{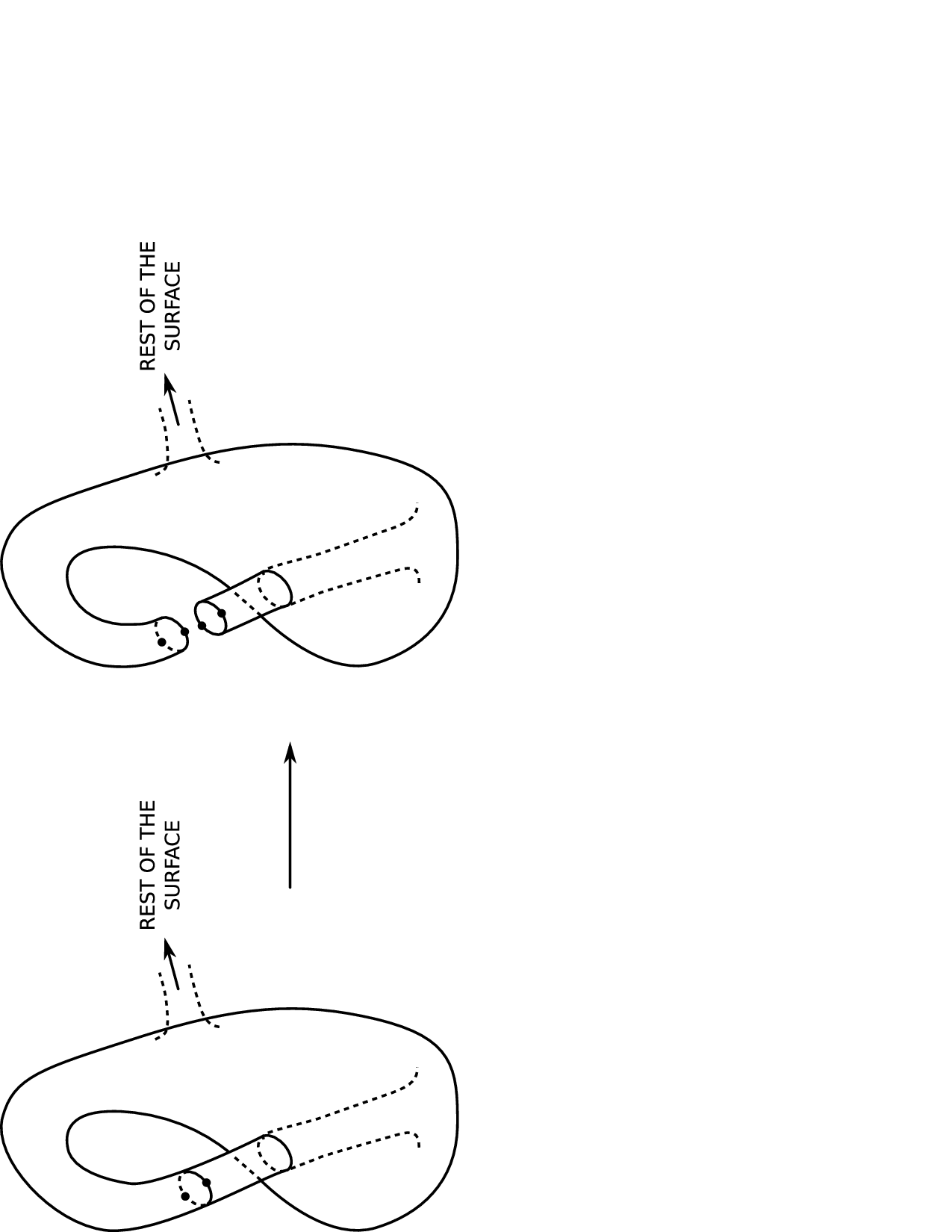}
\caption{Move of kind II.B.1.(b) or II.B.1.(c)}
\end{figure}

\begin{coup}[kind II.B.2]
Cutting $\mathscr{R}$ along $\mathscr{L}$ creates only one boundary.
\end{coup}

We obtain the relation $\chi(\mathscr{S})=\chi(\mathscr{S'})-1$, which leads to:

\begin{center}
\begin{tabular}{ rcp{1.5cm}p{4cm}l }
$\P^n$ & $\rightarrow$ & $\P^{(n-1)}$ & $(n\geq 2)$ & $(a)$ \tabularnewline
$\P^n$ & $\rightarrow$ & $\T^{\frac{n-1}{2}}$ & $(n=2k+1, k\geq 0)$ & $(b)$ \tabularnewline
\end{tabular}
\end{center}

This move is achieved by breaking a a projective plane. It is the move that the first player should play to win the 2-spot game on the projective plane: it links a spot to itself as on figure \ref{planproj}, and then the first player can force the game to end in 5 moves.

\begin{figure}[ht]
\centering
 \includegraphics[scale=0.5]{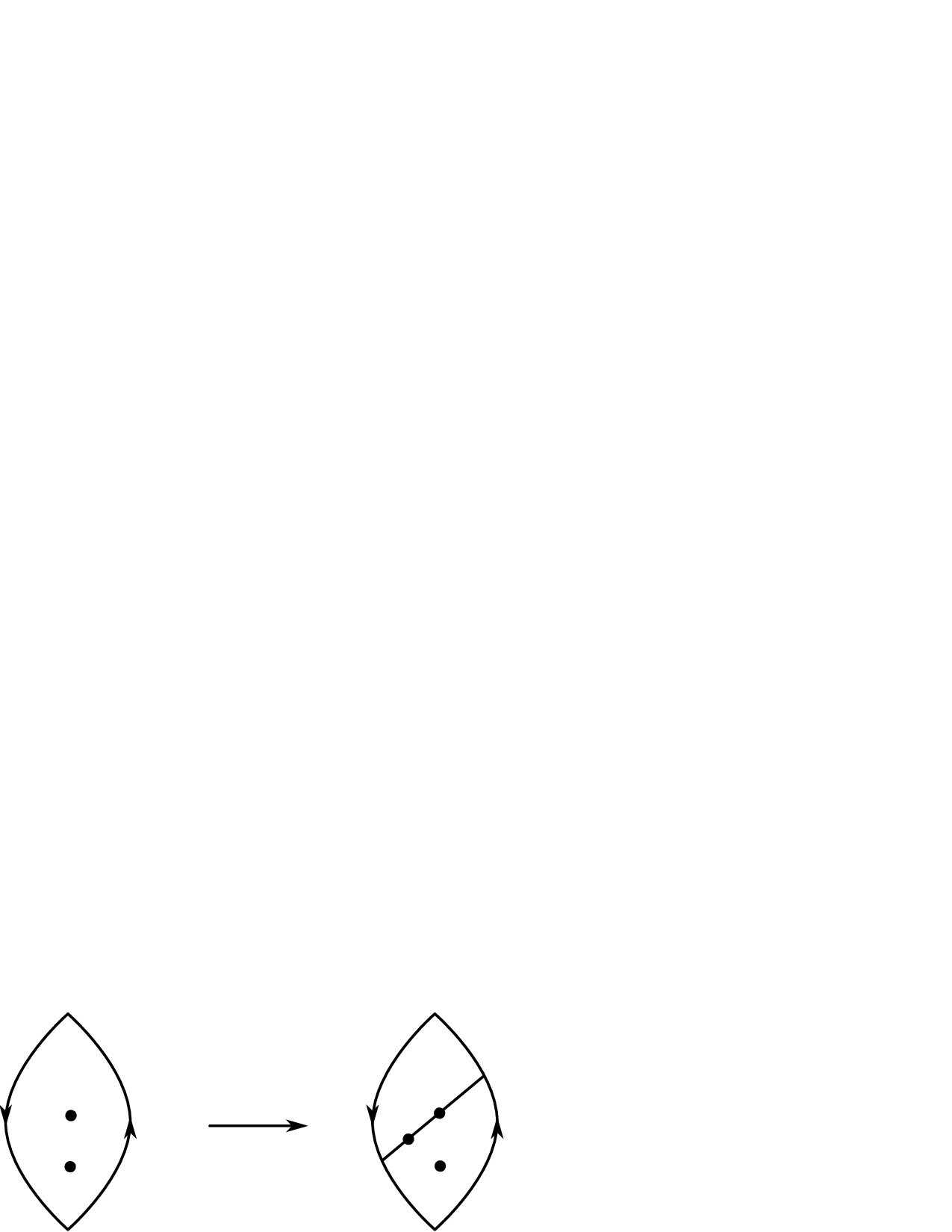}
\caption{Move of kind II.B.2.(b) to win the 2-spot game on the projective plane}
\label{planproj}
\end{figure}

\subsection{Particular cases}

In the case of the plane, we use only moves of kind I and of kind II.A.(a) with $n=k=0$.

For the game on orientable surfaces, we use moves of kind I, kind II.A.(a), and kind II.B.1.(a). Let us remark that only this last kind of move, II.B.1.(a), which is achieved by ``breaking a handle'', can lead to a game different from what is possible on the plane.

\section{Programming}

\subsection{Consequences of orientability}

In respect to orientability, regions related to an orientable surface follow the same kind of rule as the plane. When enumerating the vertices of a boundary, we need to turn around it in the way chosen for the region (clockwise or counter-clockwise), so that we turn around all boundaries in the same way.

For a region related to a non-orientable surface, there is no more concept of orientability. When enumerating the vertices of a boundary, we can freely choose the way we turn around it, either clockwise or counter-clockwise. Figure \ref{inorient} shows that on the projective plane, the same boundary can be written equally \textit{abc} or \textit{acb}.

\begin{figure}[ht]
\centering
\includegraphics[scale=0.5]{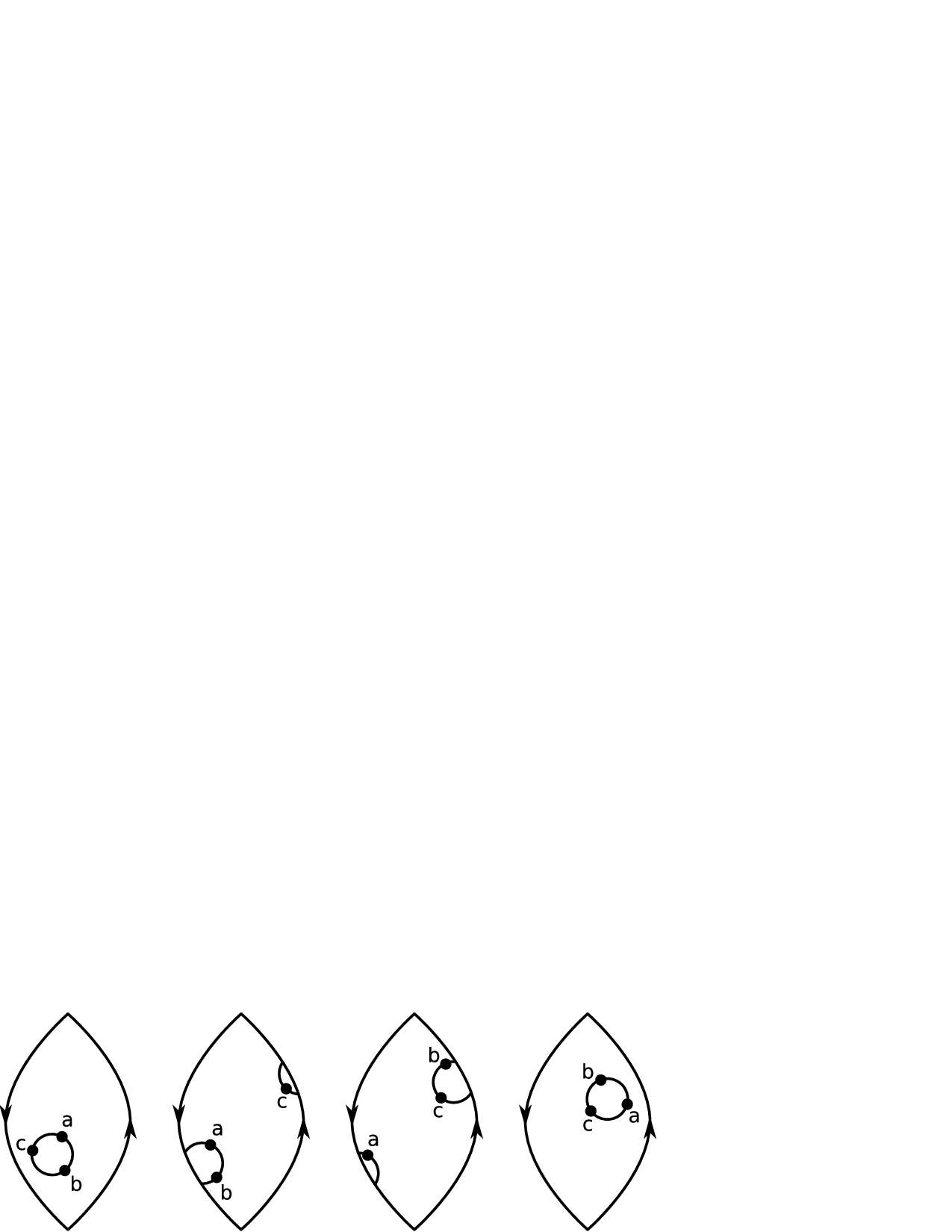}
\caption{Consequence of the projective plane non-orientability}
\label{inorient}
\end{figure}

An orientable region will remain orientable all through the game, but on the contrary non-orientable regions can lead either to non-orientable or to orientable regions.  It is possible to create an orientable region from a non-orientable one with the moves of kind II.A.(c), II.B.1.(c) or II.B.2.(b). In that case, the player can freely choose the orientation of each boundary. Indeed, each set of orientation choices corresponds to a possible move. For example, figure \ref{choixorient} shows a move of kind II.B.2.(b). Orientation of boundaries depends on the way the line is drawn.

\begin{figure}[ht]
\centering
\includegraphics[scale=0.5]{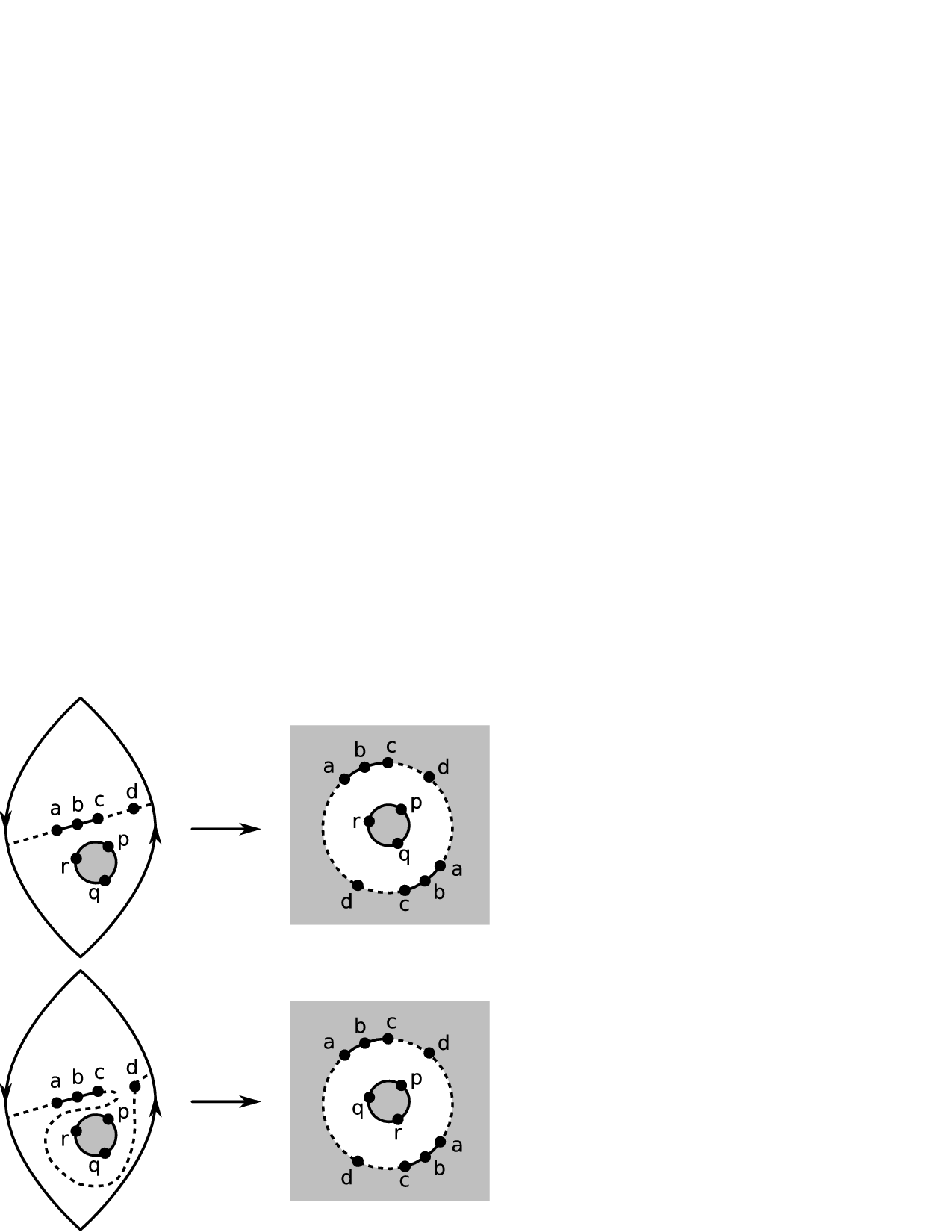}
\caption{Transition from a non-orientable surface to an orientable one}
\label{choixorient}
\end{figure}

\subsection{Positions representation}

Sprouts positions are represented in the program by list of vertices. A detailed description in the case of the plane will be found in \cite{lv07}. We describe here the changes needed to generalize this representation to compact surfaces.

The main change is the need to remember for each region of the position what is its related compact surface. For that, we simply add to the representation of each region an integer, corresponding to its genus. Orientable regions are represented with a positive integer, and non-orientable ones with a negative integer.

Moreover, on compact surfaces, a new kind of one-life vertex appears. On the plane, a one-life vertex is of two kinds only: either it belongs to a single region and therefore to a single boundary, or it belongs to two different regions and therefore two different boundaries. But in the case of compact surfaces, moves of kind II.B.1 create one-life vertices belonging to two boundaries, yet still in the same region. Although it is not necessary to distinguish between these three kinds of one-life vertex in order to program Sprouts, it allowed us to optimize some of the functions related to canonization, and it helps checking the correctness of some algorithms.

Finally, let us detail a tricky problem occurring with one-life vertices that appear two times in the same boundary. When we play on the plane, it is possible to simplify the representation by using brackets instead of letters to name these vertices\footnote{This idea is from Dan Hoey.}. For example, on figure \ref{parenth} (left), we can represent the boundary in two equivalent manners: $abcdedcafgf$ or $(b((e)))(g)$.

\begin{figure}[ht]
\centering
\includegraphics[scale=0.5]{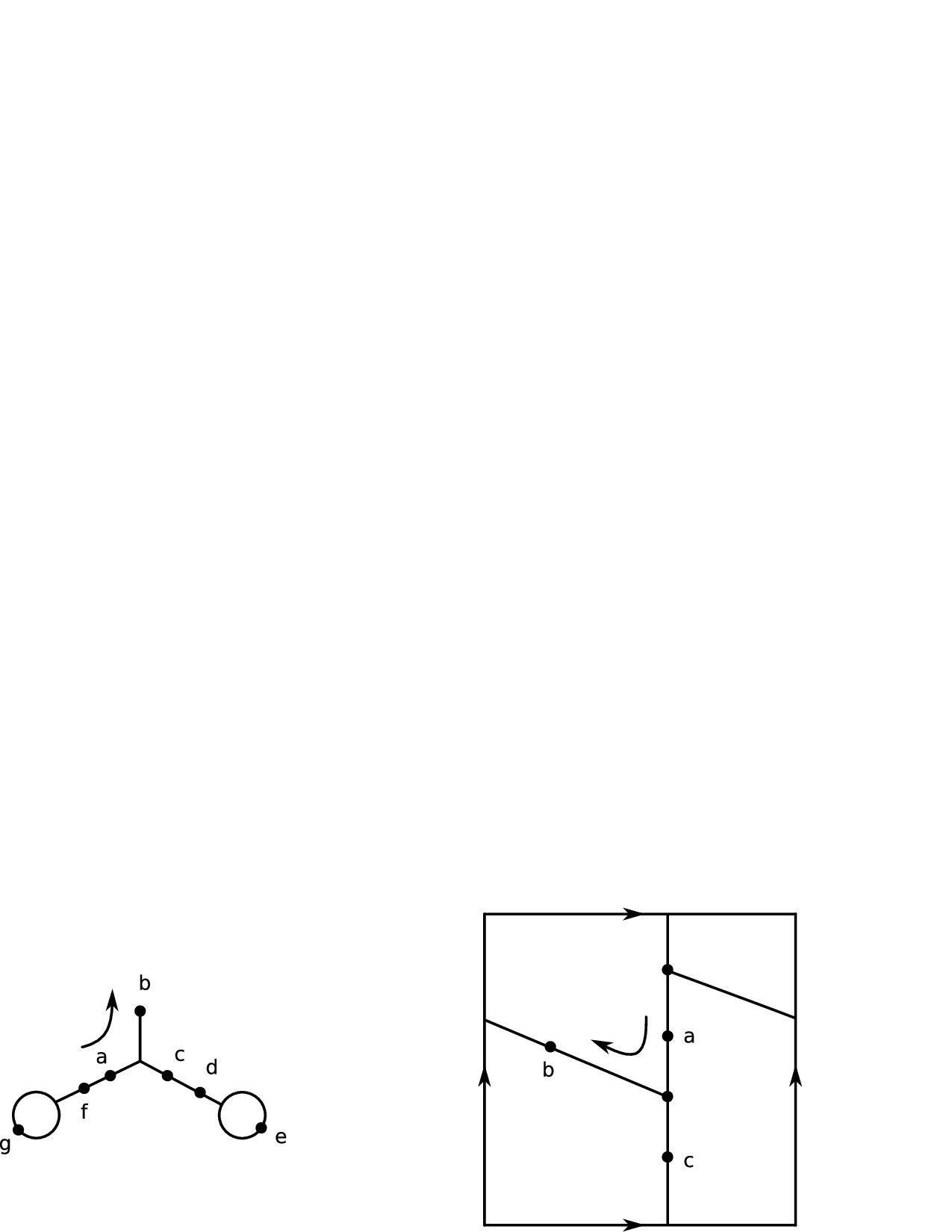}
\caption{Parenthesizable and non-parenthesizable positions}
\label{parenth}
\end{figure}

However, this way of using brackets is not possible when playing on other surfaces than the plane. For example, the position on figure \ref{parenth} (right), occurs in a 2-spot game on the torus, after three moves. If we turn around the boundary in the way indicated by the arrow, the representation becomes \textit{abcabc}, which wouldn't be correctly represented with brackets.

\subsection{Merging boundaries}

When we link two boundaries, the result is a new boundary obtained by merging the two initial ones. Let us detail this merging process for each kind of move described in section \ref{types}.

\begin{itemize}
\renewcommand{\labelitemi}{*}
\item move of kind I: Let suppose that $(a_1,...,a_r)$ with $r\geq 2$ and $(b_1,...,b_s)$ with $s\geq 2$ are two boundaries, and that we link $a_i$ to $b_j$, creating a new vertex $c$. Then, the result of the merging is the new boudary \\$(a_i,...a_r,a_1,...,a_i,c,b_j,...,b_s,b_1,...,b_j,c)$ if we play in an orientable region, and could also be $(a_i,...a_r,a_1,...,a_i,c,b_j,...,b_1,b_s,...,b_j,c)$ if we play in a non-orientable one. Note: in the particular case where the boundary $(a_1,...,a_r)$ has only one vertex $a_1$, we have $a_i,...a_r,a_1,...,a_i=a_1$.
\item move of kind II.A, II.B.1.(a) or II.B.1.(b): If we link $a_i$ to $a_j$ ($i\leq j$) in the boundary $(a_1,...,a_r)$, creating a new vertex $c$, then we obtain two boundaries: $(a_j,...a_r,a_1,...,a_i,c)$ and $(a_i,...,a_j,c)$.
\item move of kind II.B.1.(c): If we link $a_i$ to $a_j$ ($i\leq j$) in the boundary $(a_1,...,a_r)$, creating a new vertex $c$, then we obtain two boundaries: \\$(a_j,...a_r,a_1,...,a_i,c)$ and $(a_j,...,a_i,c)$.
\item move of kind II.B.2.: If we link $a_i$ to $a_j$ ($i\leq j$) in the boundary $(a_1,...,a_r)$, creating a new vertex $c$, then we obtain only one boundary: \\$(a_j,...a_r,a_1,...,a_i,c,a_j,...,a_i,c)$.
\end{itemize}

\section{Canonical game trees}

We introduce in this section a useful concept to define an equivalence between positions leading to ``the same game''. This will help us in the next section to prove some theoretical results on compact surfaces.

We call \textit{game tree obtained from a position $\mathscr{P}$} the tree where vertices are the positions obtained when playing moves from $\mathscr{P}$, and where two positions $\mathscr{P}_1$ and $\mathscr{P}_2$ are linked by an edge if  $\mathscr{P}_2$ is obtained with a move from $\mathscr{P}_1$.

We define recursively an equivalence relation between the game trees: two game trees $\mathscr{A}_1$ and $\mathscr{A}_2$, obtained respectively from $\mathscr{P}_1$ and $\mathscr{P}_2$, are equivalent if the set of equivalence classes of trees obtained from children of $\mathscr{P}_1$ is equal to the set of equivalence classes from trees otained from children of $\mathscr{P}_2$. We call \textit{canonical tree} a tree in the equivalence class with a minimal number of vertices.

\begin{figure}[ht]
\centering
\includegraphics[scale=0.5]{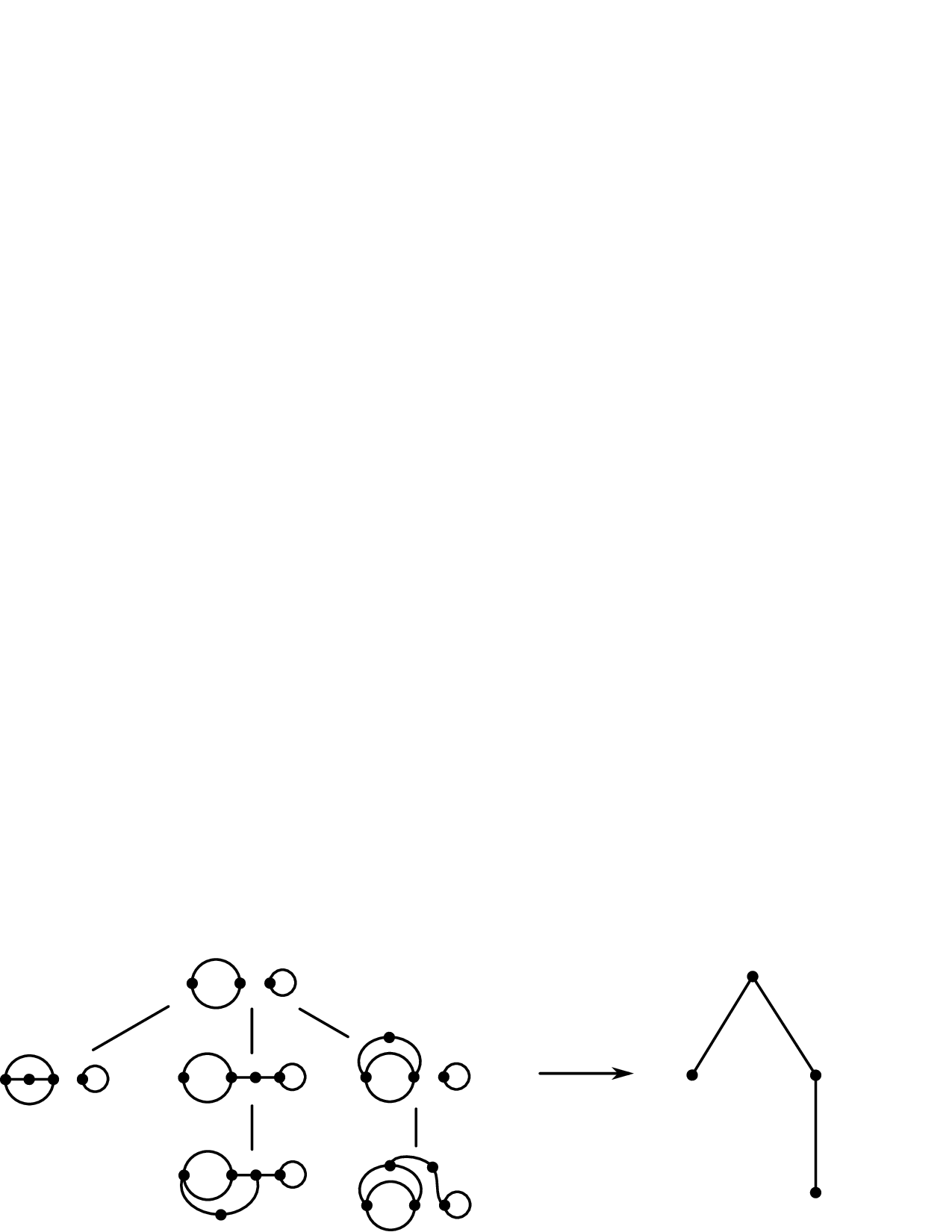}
\caption{Game tree obtained from a given position and corresponding canonical tree}
\label{arbres}
\end{figure}

Figure \ref{arbres} shows on the left a game tree obtained from a given Sprouts position. The two branches on the right lead to similar games, so that we can merge these two branches into a single one in the canonical game tree. The game tree, as well as the canonical game tree, are of height 2, because the longest game possible ended in 2 moves.

\begin{figure}[ht]
\centering
\includegraphics[scale=0.5,angle=-90]{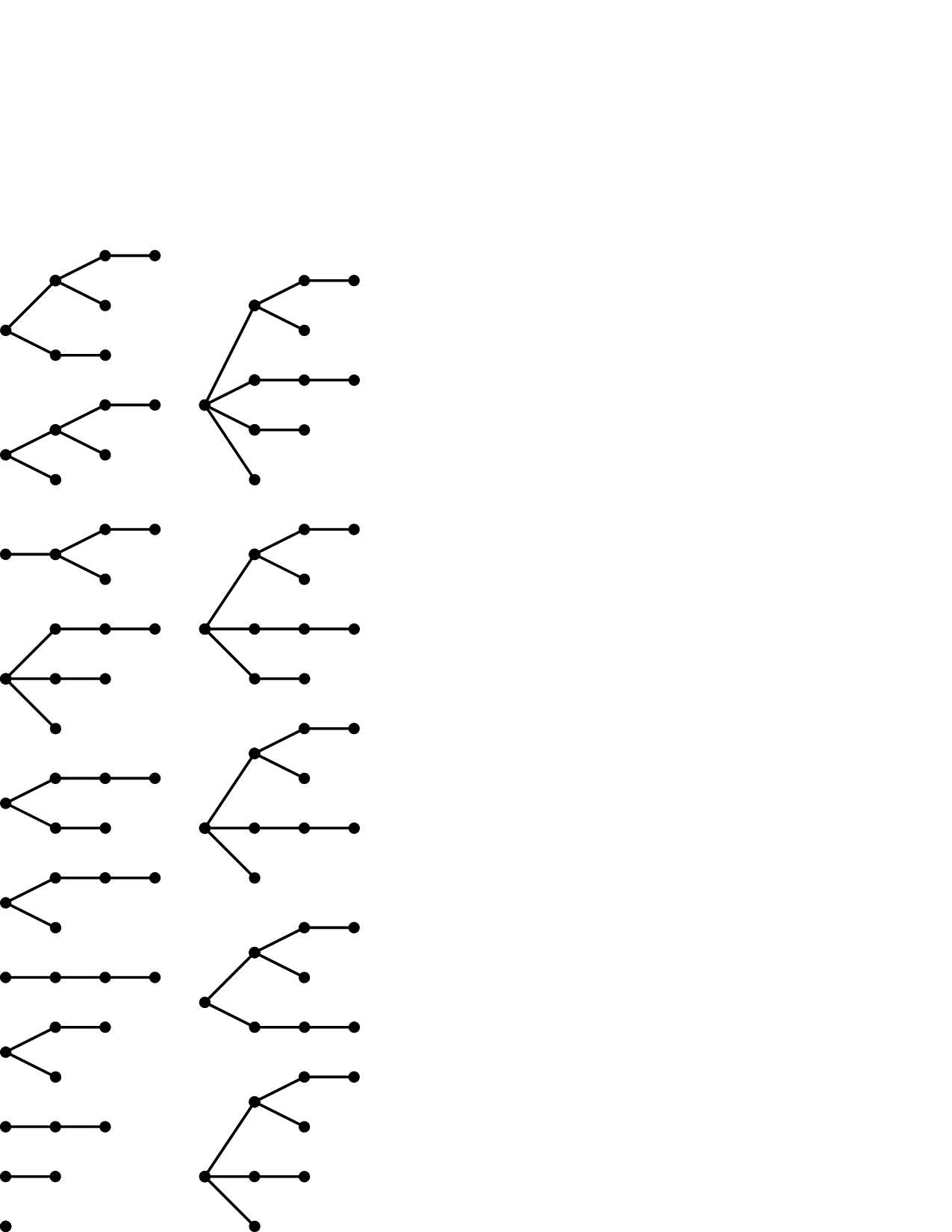}
\caption{Canonical game trees of height $\leq 3$}
\label{arbres hauteur 3}
\end{figure}

We can count the number of canonical game trees of a given height, as on figure \ref{arbres hauteur 3}, which shows the 16 canonical game trees of height $\leq 3$.

\begin{prop}
There are $2^{(2^{(...^{(2^0)})})}$ (with $h+1$ times the number 2) canonical game trees of height $\leq h$.
\end{prop}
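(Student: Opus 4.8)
The plan is to derive a recurrence for the number $T(h)$ of canonical game trees of height $\leq h$ and then solve it by induction. The key observation is that the recursive definition of the equivalence turns a canonical tree into a purely set-theoretic object: two trees are equivalent exactly when the \emph{sets} of equivalence classes of their children coincide, so a canonical tree of height $\leq h$ is completely determined by the set of equivalence classes of its children, each being a canonical tree of height $\leq h-1$. The base case is immediate: a tree of height $0$ is a single vertex with no children, hence there is exactly one canonical tree of height $\leq 0$ and $T(0)=1$, matching the claimed value (the tower reduced to the single symbol $2^0$).

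The main step is to exhibit, for $h \geq 1$, a bijection between the canonical game trees of height $\leq h$ and the subsets of the collection of canonical game trees of height $\leq h-1$. To such a tree I associate the set of equivalence classes of its children; since each child has height $\leq h-1$, this is indeed a subset of the height-$\leq(h-1)$ canonical trees. The map is injective because two canonical trees sharing the same set of children classes are equivalent, hence equal as canonical representatives. For surjectivity, given a subset $S$, I form a tree by taking a fresh root and attaching one representative of each class in $S$; the empty set yields the single-vertex tree, and in general the result has height $\leq h$, its children are pairwise inequivalent and individually minimal, so the tree is the minimal representative of its class, i.e.\ canonical. This bijection gives $T(h) = 2^{T(h-1)}$.

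It then remains to solve the recurrence. Writing the claimed value as the tower $\tau(h)$ with $\tau(0)=2^0$ and $\tau(h)=2^{\tau(h-1)}$, so that $\tau(h)$ contains $h+1$ copies of $2$, the recurrence $T(h)=2^{T(h-1)}$ together with $T(0)=\tau(0)$ gives $T(h)=\tau(h)$ by a one-line induction. The step I expect to require the most care is the surjectivity half of the bijection: one must confirm that every subset of lower trees is genuinely realized by a \emph{canonical} (that is, minimal) tree, and in particular handle the empty-set case correctly, so that counting heights $\leq h$ rather than exactly $h$ nests consistently and no off-by-one error slips into the height of the exponent tower.
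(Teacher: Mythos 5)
Your proof is correct. The paper actually states this proposition without any proof (it is presented as an observation, illustrated only by the figure showing the $16$ canonical trees of height $\leq 3$), so there is nothing to compare against; your argument --- the bijection between canonical trees of height $\leq h$ and subsets of the canonical trees of height $\leq h-1$, giving $T(h)=2^{T(h-1)}$ with $T(0)=2^0=1$ --- is the natural justification the authors evidently had in mind, and it reproduces the paper's count ($T(3)=16$). Your care over the empty-set case and over minimality of the tree built from pairwise inequivalent canonical children is exactly what is needed to make the statement rigorous.
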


This notion of canonical game tree allows us to keep only the necessary and sufficient information of a game tree to describe the possible games from a given position: if two positions have the same canonical game tree then, whatever the kind of rules (normal, mis\`{e}re, or any other rule), the two positions are equivalent and the same player has a winning strategy.

\section{Limit genus}

We state in this section a number of results on the influence of the surface on the game.

\subsection{Case of regions with 3 lives or less}

\begin{prop}
In a position, if a region has 3 lives or less, then whatever the surface associated to this region is, the canonical game tree obtained from that position is the same.
\end{prop}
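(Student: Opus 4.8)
The plan is to reduce the statement to the sub-game played \emph{inside} the region $\mathscr{R}$ itself, and then to propagate the result to the whole position. Since every move is played in a single region and leaves the other regions untouched, the game tree of the whole position is obtained by ``interleaving'' the tree coming from $\mathscr{R}$ (and the regions it splits into) with the fixed trees of the remaining regions. A routine induction on the height shows that this interleaving respects the equivalence of canonical game trees: if two positions have the same canonical tree, so do their disjoint combinations with any fixed $\mathscr{R}$. It therefore suffices to prove that the sub-game issued from a region carrying at most $3$ lives has a canonical tree independent of the surface $\mathscr{S}$ attached to it.

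The basic tool is a count of lives. Each move joins two lives and creates one new one-life vertex, so every move decreases the total number of lives by exactly one; and since a move needs two lives accessible from a common region, a region with $\ell$ lives can sustain at most $\ell-1$ moves. For $\ell\leq 3$ this already bounds the height of the sub-game by $2$. I would then show, descending from $3$ lives, that this sub-game is a \emph{forced path}, meaning that every reachable position has all its children equivalent to one another: a position with one life or less is terminal, and a position with exactly two lives that are accessible from a common region admits, up to equivalence, exactly one further move, after which it dies. Hence such a position has canonical tree equal to the path of height $1$, independently of $\mathscr{S}$.

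The heart of the matter, and the step I expect to be delicate, is to check that a region with three lives always reaches, in a single move, a position whose two surviving lives are \emph{accessible from a common region}, whatever move was played and whatever exotic topology the surface allowed for it (ordinary split, handle breaking, neck breaking, orientation change, etc.). The decisive observation is that the newly created vertex $c$ always lies on the line drawn during the move, hence on the boundary of \emph{every} region produced by that move; consequently the remaining life of $c$ can be used from each of those regions. The only other surviving life lies on some vertex $u$, and $u$ belongs to the closure of one of these regions, in which $c$ is then also available; so $u$ and $c$ can always be joined. A second move therefore always exists and always leads to a terminal position, regardless of the surface. This is precisely where the hypothesis ``$3$ lives or less'' is essential: with more lives a first move could strand the surviving lives in genuinely separate regions, and which such splittings are realizable depends on $\mathscr{S}$, so the forced-path phenomenon—and with it the surface independence—would break down.

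Putting the pieces together, a region with $\ell\leq 3$ lives has, independently of the associated surface, a canonical sub-game tree equal to the path of height $\max(\ell-1,0)$. Feeding this surface-independent tree into the interleaving of the first paragraph then yields the same canonical game tree for the whole position, whatever surface is associated to the region, which is the assertion of the proposition.
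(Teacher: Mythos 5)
Your analysis of the moves played \emph{inside} the region is exactly the paper's: a case split on the number of lives, with the decisive observation that the vertex $c$ created by a move lies on the line just drawn, hence on the boundary of \emph{every} region that move produces, so that after a first move in a 3-life region the two surviving lives always share a region, exactly one further move exists, and it is terminal. That forced-path phenomenon is the heart of the proposition, and you have it right, including the remark that $3$ is the largest value for which it holds.

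The gap is in your first paragraph, the reduction to an isolated sub-game. The premise that ``every move is played in a single region and leaves the other regions untouched'' is false in Sprouts: a vertex may lie on the boundary of several regions, so a move played in some \emph{other} region can consume one or two of the lives that $\mathscr{R}$ counts as its own (and, symmetrically, a move inside $\mathscr{R}$ can consume lives visible from outside). Consequently a position is \emph{not} a disjoint combination of the sub-game of $\mathscr{R}$ with ``fixed trees of the remaining regions,'' and the interleaving lemma you invoke does not apply: the standard fact that equivalence of summands passes to sums presupposes non-interacting components, which is exactly what fails here. Your path of height $\max(\ell-1,0)$ is only the tree of the sub-game played in isolation; external moves can shorten it at any moment, and this coupling is never addressed in your argument. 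The paper closes precisely this hole without any disjointness assumption: it argues on the position as a whole, by induction, observing that a move played outside $\mathscr{R}$ is available, and has the same effect, whatever surface is attached to $\mathscr{R}$ --- it merely decreases the number of lives of $\mathscr{R}$, which puts you back in a case already treated (at most two lives, then at most one). If you replace your interleaving lemma by this observation while keeping your analysis of internal moves, the proof becomes correct and coincides with the paper's.
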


\begin{proof}
If a region has no or one life, we can't play a move inside it, so that the surface associated has no influence on the game: the proposition is obvious for regions with at most one life.

Let us consider the case of a region with exactly two lives. Whatever the surface associated to this region is, there is only one possible move inside, which is to link the two lives. After this move, there is only one life left in the region, which is the previous case. The moves done outside the region are not influenced by the surface associated to the region, and modifies the region only by decreasing its number of lives. It follows that the proposition is true for regions with at most two lives.

The case of region with three lives is quite similar. Moves outside the region can be treated with the same argument. In the case of a move inside the region, two cases are possible: either the move doesn't create a new region, in which case we are simply back to a region with two lives; or the move split the region in two new ones. But in this case, at least one of the two new regions has two lives, so that another move is possible, and this move is necessarily the last one. It follows that whether we split the region in two or not, the canonical tree is the same.
\end{proof}

\begin{figure}[ht]
\centering
\includegraphics[scale=0.5,angle=-90]{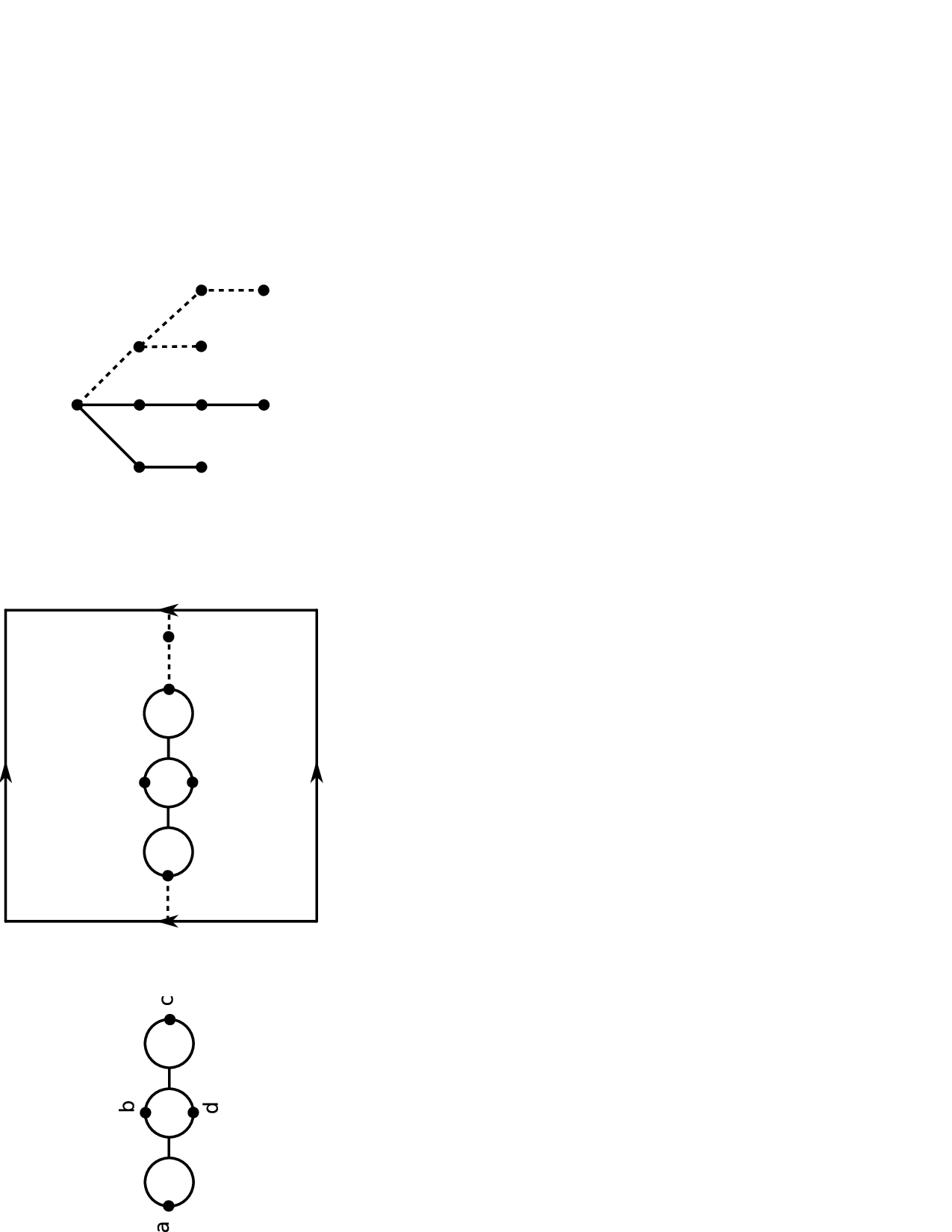}
\caption{Position whose canonical tree depends on the surface}
\label{2A2B}
\end{figure}

Let us remark that 3 is the best value: we have drawn on figure \ref{2A2B} a Sprouts position with 4 lives, whose canonical tree depends on the surface. If the outside surface is a plane, the canonical tree is the one on the right of the figure: either we link a to c, or b to d, and the game ends in two moves. Or we link a to b, b to c, c to d or d to a, and the game ends in three moves. However, if we play on $\T$, there is another possible move, shown with a dotted line, and this move creates the dotted branch in the canonical tree: either we link b to d inside the little circular region, ending the game, or we play any other move, and the game ends with one other move.

\subsection{Limit genus on orientable surfaces}

We can state a more general result on the influence of the surface on the game. Let us begin first with orientable surfaces.

\begin{prop}
For a given region $\mathscr{R}$, there exists an integer $g(\mathscr{R})$, the \emph{limit genus}, such that for every $n\geq g(\mathscr{R})$, and for any position $\mathscr{P}$ including the region $\mathscr{R}$, the canonical tree obtained from $\mathscr{P}$ with the region $\mathscr{R}$ homeomorphic to a surface $\T^{n}$ is the same as the canonical tree obtained from $\mathscr{P}$ with the region $\mathscr{R}$ homeomorphic to a surface $\T^{n+1}$.
\end{prop}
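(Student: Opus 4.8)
The plan is to argue by a double induction. The guiding principle is that on an orientable region only three kinds of move are available (kind I, kind II.A.(a) and kind II.B.1.(a)), and none of them increases the genus; moreover every move played \emph{inside} the region (or inside a region it later breaks into) strictly decreases the number of lives contained in that sub-system. Since the number of lives of $\mathscr{R}$ is finite, the genus-lowering moves of kind II.B.1.(a), together with the way kind II.A.(a) distributes the genus between the two halves, can only consume a bounded amount of genus. The limit genus $g(\mathscr{R})$ will quantify exactly this bound.

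First I would define $g(\mathscr{R})$ by recursion on the number of lives of the region. If $\mathscr{R}$ has at most one life, no move is possible inside it, so the surface is irrelevant and we set $g(\mathscr{R})=0$. Otherwise, examining the three move kinds: a kind I move turns $\mathscr{R}$ into a region $\mathscr{R}'$ of the same genus but fewer lives; a kind II.B.1.(a) move turns $\mathscr{R}$ into $\mathscr{R}'$ of genus one less; and a kind II.A.(a) move splits $\mathscr{R}$ into two regions $\mathscr{R}'_1,\mathscr{R}'_2$, each with strictly fewer lives, whose genera add up to that of $\mathscr{R}$. Accordingly I set $g(\mathscr{R})$ to be the maximum, over these finitely many moves, of $g(\mathscr{R}')$ (kind I), of $g(\mathscr{R}')+1$ (kind II.B.1.(a)), and of $g(\mathscr{R}'_1)+g(\mathscr{R}'_2)$ (kind II.A.(a)). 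An easy induction then shows $g(\mathscr{R})$ is bounded by the number of lives of $\mathscr{R}$, so it is a well-defined finite integer.

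The correctness of this choice I would prove by strong induction on the total number of lives of the whole position $\mathscr{P}$, which bounds the height of the game tree. The goal is to exhibit, for $n\geq g(\mathscr{R})$, a bijection between the moves available from $\mathscr{P}$ with $\mathscr{R}\cong\T^{n}$ and those available with $\mathscr{R}\cong\T^{n+1}$, under which corresponding children have equal canonical trees; by the recursive definition of canonical-tree equivalence this forces the two parent canonical trees to coincide. Moves played outside $\mathscr{R}$ leave $\mathscr{R}$ untouched, so they are literally the same in both positions and lead to positions with one fewer total life still containing $\mathscr{R}$ with the same genus parameter, and the induction hypothesis applies verbatim. A kind I move and a kind II.B.1.(a) move are each matched with themselves: they lead respectively to $(\mathscr{R}',\T^{n})$ versus $(\mathscr{R}',\T^{n+1})$ and to $(\mathscr{R}',\T^{n-1})$ versus $(\mathscr{R}',\T^{n})$, and the inequalities $n\geq g(\mathscr{R}')$ and $n-1\geq g(\mathscr{R}')$ built into the definition of $g(\mathscr{R})$ let the induction hypothesis identify the children.

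The heart of the argument, and the step I expect to be the main obstacle, is matching the kind II.A.(a) moves. For a fixed way of cutting $\mathscr{R}$ into halves $\mathscr{R}'_1,\mathscr{R}'_2$, playing on $\T^{n}$ produces the children with genera $(k,n-k)$ for $0\leq k\leq n$, while playing on $\T^{n+1}$ produces $(k,n+1-k)$. Writing $g_1=g(\mathscr{R}'_1)$ and $g_2=g(\mathscr{R}'_2)$, the induction hypothesis (applied repeatedly to the two smaller regions inside the common outside position) shows that such a child depends on its genus pair only through the \emph{effective} pair $(\min(k,g_1),\min(n-k,g_2))$. A short combinatorial computation then shows that, as soon as $n\geq g_1+g_2$, the set of effective pairs realisable on $\T^{n}$ equals the fixed set of pairs $(a,b)$ with $a=g_1$ or $b=g_2$, and is therefore independent of $n$; in particular it is the same set for $\T^{n}$ and $\T^{n+1}$, which yields the required bijection of children. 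Since $n\geq g(\mathscr{R})\geq g_1+g_2$ holds by construction, this closes the induction and proves the proposition; along the way it also shows that the number of lives of $\mathscr{R}$ is always an admissible value for the limit genus.
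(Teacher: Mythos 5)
Your overall strategy is the same as the paper's: induction on the number of lives, with the limit genus defined as a maximum over the available moves, adding $1$ for the handle-breaking moves of kind II.B.1.(a) and summing the two limit genera for the splitting moves of kind II.A.(a). Your analysis of the splitting case via the effective pairs $(\min(k,g_1),\min(n-k,g_2))$ is correct, and is in fact more detailed than the paper's one-line justification of the bound $n\geq c_k+d_k$. However, there is a genuine gap in your treatment of moves played outside $\mathscr{R}$.

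The claim that ``moves played outside $\mathscr{R}$ leave $\mathscr{R}$ untouched'' is false. A vertex on a boundary of $\mathscr{R}$ generally also lies on the boundary of an adjacent region, and a move played in that adjacent region can attach one or two new edges to such vertices, killing one or two lives of $\mathscr{R}$ (possibly on two different boundaries). The underlying surface and point set of $\mathscr{R}$ are unchanged, but as a game object $\mathscr{R}$ becomes a different region $\mathscr{R}_l$ with fewer lives, hence with a possibly different limit genus; which moves are legal inside it has changed. At this step your induction needs $n\geq g(\mathscr{R}_l)$ to identify the two children, but your recursive definition of $g(\mathscr{R})$ --- a maximum taken only over moves \emph{inside} $\mathscr{R}$ --- gives no control over $g(\mathscr{R}_l)$. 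The paper closes exactly this hole by including, in the maximum defining the bound on $g(\mathscr{R})$, the limit genera $e_l$ of all regions obtained from $\mathscr{R}$ by killing one or two lives of its boundaries in every possible way; this is what makes $g(\mathscr{R})$ a function of $\mathscr{R}$ alone, uniformly over all ambient positions $\mathscr{P}$, while keeping the recursion well founded (each $\mathscr{R}_l$ has strictly fewer lives). Alternatively, you could keep your definition and prove a monotonicity lemma --- killing lives of a region never increases its limit genus --- by an induction parallel to yours, and then your argument would go through; but that lemma is neither stated nor proved in your proposal, and without one of these two repairs the induction does not close.
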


In other words, it means that when there are too many handles in a region $\mathscr{R}$, the game ends necessarily before we can ``use'' all of them. As a consequence, the canonical tree obtained from a position $\mathscr{P}$ remains constant (we get a \emph{limit canonical tree}) from a limit number of handles, which we call the limit genus $g(\mathscr{R})$.

\begin{proof}
We use a proof by induction on the number of lives of the regions. For a given region $\mathscr{R}$ included in a position $\mathscr{P}$, there are four kinds of moves that can change it. Each move changes $\mathscr{R}$ into one or two regions, each having strictly less lives than $\mathscr{R}$, so that the induction hypothesis implies the existence of a limit genus for these resulting regions.

The four kinds of move are as follows:
\begin{enumerate}
\item moves of kind I inside the region: these moves lead to regions $\mathscr{R}_i$, with $g(\mathscr{R}_i)=a_i$ (the index $i$ takes a finite number of values because a given position leads only to a finite number of different moves). The surface associated to each $\mathscr{R}_i$ is the same as the surface associated to $\mathscr{R}$. It implies that if $\mathscr{R}$ is homeomorphic to a surface $\T^n$ with $n\geq \max\{a_i\}$, then the canonical tree of a child of $\mathscr{P}$ obtained after playing a move of kind I is the limit canonical tree.
\item moves of kind II.B.1.(a) inside the region: these moves lead to regions $\mathscr{R}_j$, with $g(\mathscr{R}_j)=b_j$. Since this kind of move decreases the genus of the region by one, we need that $n\geq \max\{b_j+1\}$ to be sure of having the limit canonical tree.
\item moves of kind II.A.(a) inside the region: the region is broken into two new regions $\mathscr{R}_k$ and $\mathscr{R}'_k$, with $g(\mathscr{R}_k)=c_k$ and $g(\mathscr{R}'_k)=d_k$. We need this time that $n\geq \max\{c_k+d_k\}$, because this kind of move divides the genus of $\mathscr{R}$ between the two new regions.
\item moves outside the region: we need to consider only the moves that kill one or two lives of the region boundaries (we need to consider all the ways of killing one or two lives, even if the two lives are on different boundaries). These moves lead to regions $\mathscr{R}_l$, with $g(\mathscr{R}_l)=e_l$. Since the surface associated to the region is unchanged, we need only $n\geq \max\{e_l\}$.
\end{enumerate}

We can see now that if we take $n=\max\{a_i;b_j+1;c_k+d_k;e_l\}$, and if the surface associated to $\mathscr{R}$ is $\T^n$, then the canonical game tree obtained from position $\mathscr{P}$ is the limit canonical tree. So $g(\mathscr{R})$ exists, and $g(\mathscr{R})\leq\max\{a_i;b_j+1;c_k+d_k;e_l\}$.
\end{proof}

As an immediate corollary of this proposition, it follows that there is a limit integer $n_0$, such that for every $n\geq n_0$ the game with $p$ starting spots on $\T^n$ has the same winner as the game with $p$ starting spots on $\T^{n_0}$.

\subsection{Case of p=2 on orientable surfaces}

We give here an elementary proof that $n_0=0$ if $p=2$: indeed, the second player has always a winning strategy on an orientable surface. The initial 2 spots have a total of 6 lives, and the winning strategy consists to end the game with two isolated spots (with one life each), so that the game ends in $6-2=4$ moves.

There are 3 possibilities of first move for the first player:
\begin{itemize}
\renewcommand{\labelitemi}{*}
 \item move of kind I: it links the two spots. The second player then plays a move of kind II.A.(a), cutting the game field into two regions. Then, when the first player plays in one region, the second player just plays in the other one and wins.
\item move of kind II.A.(a): the surface is separated into two regions. In this case, the second player links the two spots of the boundary, inside the region without the unused spot. There are only two moves left, with the unused spots, so the second player wins.
\item move of kind II.B.1.(a): it breaks a handle by linking a spot to itself. The second player links the remaining spot to itself, along a parallel path. It breaks the surface into two parts, one of which is a cylinder. Then, if the first player plays in the cylinder, the second player plays in the other part, and conversely, if the first player plays in the other part first, then the second player plays in the cylinder.
\end{itemize}

Figure \ref{solution2} shows the first two moves of this strategy on the torus. The first move is in plain line, and the correct answer of the second player in dotted line.

\begin{figure}[ht]
\centering
\includegraphics[scale=0.5,angle=-90]{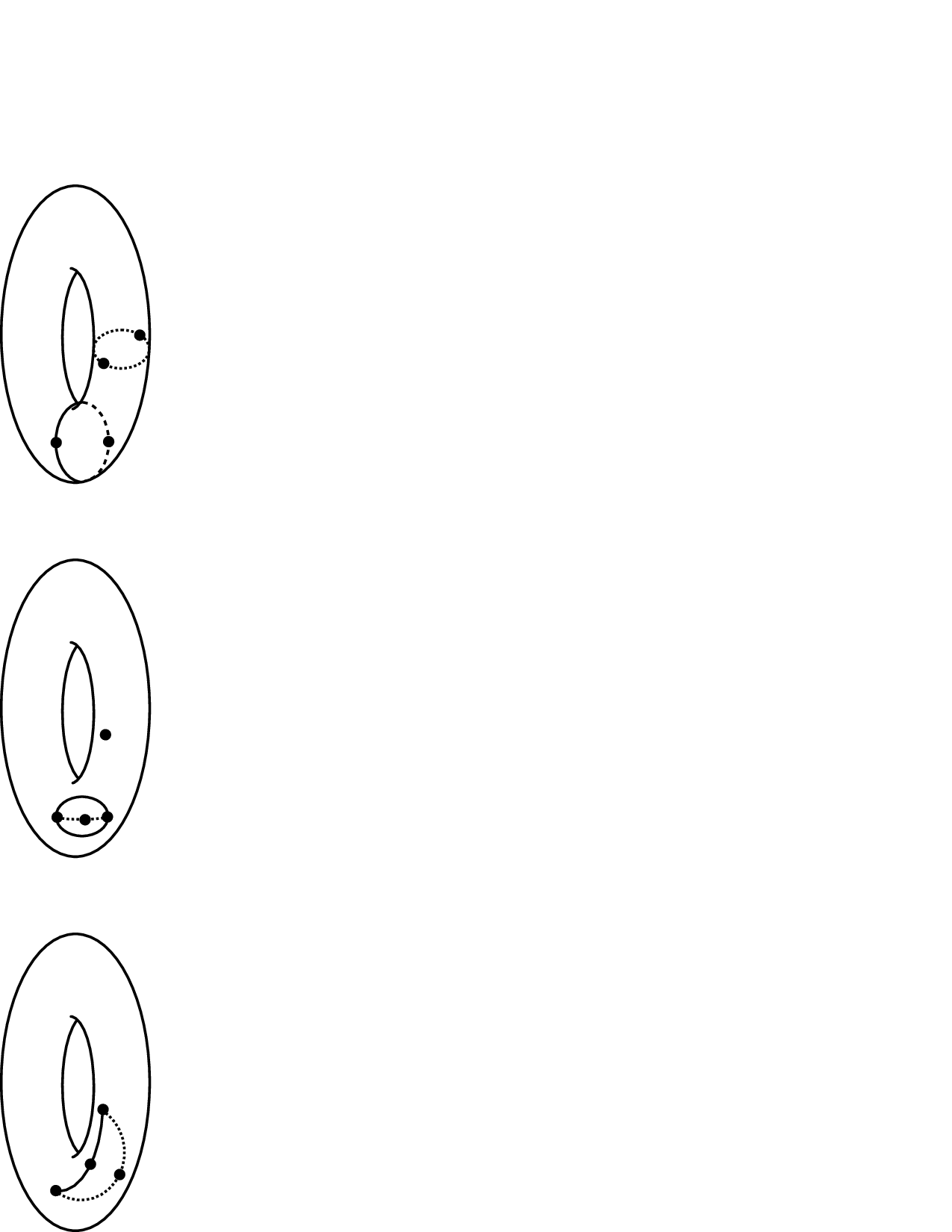}
\caption{Solution of the 2-spot game on the torus}
\label{solution2}
\end{figure}

\subsection{Limit genus on non-orientable surfaces}

The property of limit genus described above also exists on non-orientable surfaces, with a variation: from the limit genus, the canonical tree remains constant on $\P^n$ if and only if $n$ has the same parity.

\begin{prop}
For a given region $\mathscr{R}$, there exists an integer $g'(\mathscr{R})$ such that for every $n\geq g'(\mathscr{R})$, and any position $\mathscr{P}$ including the region $\mathscr{R}$, the canonical tree obtained from $\mathscr{P}$ when $\mathscr{R}$ is homeomorphic to $\P^{n}$ is the same as the canonical tree obtained from $\mathscr{P}$ when $\mathscr{R}$ is homeomorphic to $\P^{n+2}$.
\end{prop}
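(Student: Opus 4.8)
The plan is to follow exactly the induction-on-number-of-lives scheme of the preceding (orientable) proposition, only now enumerating the richer list of moves available when $\mathscr{R}$ is homeomorphic to $\P^n$. For a region $\mathscr{R}$ with a fixed number of lives I would take as induction hypothesis that the statement holds for every region with strictly fewer lives (each move turns $\mathscr{R}$ into one or two regions of strictly smaller total number of lives, so the induction is well-founded), and I would also freely invoke the orientable limit genus $g$ of the previous proposition for any orientable region that can arise. The moves that can affect $\mathscr{R}=\P^n$ are those of kind I, II.A.(b), II.A.(c), II.B.1.(b), II.B.1.(c), II.B.2.(a), II.B.2.(b), together with the outside moves killing one or two boundary lives. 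For each of these I would produce a genus threshold, exactly as the quantities $a_i$, $b_j$, $c_k+d_k$, $e_l$ of the orientable case, and set $g'(\mathscr{R})$ to be their maximum.

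The structural heart of the argument, and the reason the jump is by $2$ rather than by $1$, is a \emph{parity} observation. The torus-producing moves II.B.2.(b) and II.B.1.(c) are available only when $n$ is odd, respectively even, and the non-orientable part $\P^{n-2k}$ of a move II.A.(c) always has the parity of $n$. Since $n$ and $n+2$ share the same parity, exactly the same move types are available on $\P^n$ and on $\P^{n+2}$. Matching each move on $\P^n$ with the corresponding move on $\P^{n+2}$, I would then check that every non-orientable child sees its genus increase by exactly $2$ (e.g. $\P^{n-1}\to\P^{n+1}$ for II.B.2.(a), $\P^{n-2}\to\P^n$ for II.B.1.(b), $\P^n\to\P^{n+2}$ for kind I and for outside moves), so the parity-indexed induction hypothesis applies; while every orientable child sees its genus increase by exactly $1$ (e.g. $\T^{(n-1)/2}\to\T^{(n+1)/2}$ for II.B.2.(b), $\T^{(n-2)/2}\to\T^{n/2}$ for II.B.1.(c)), so the orientable proposition applies. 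This is precisely what would break down for a jump by $1$, where the available move types themselves would differ.

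The delicate cases are the splitting moves II.A.(b) and II.A.(c), where the genus of $\mathscr{R}$ is distributed between the two children. Fixing the boundary distribution (hence fixing the two child region types, with limit genera $c$ and $d$ coming from the orientable proposition or the induction hypothesis as appropriate), I would argue that once $n$ exceeds a bound built from $c$ and $d$ (of the form $c+d$ for II.A.(b), and $2c+d$ for II.A.(c), reflecting that an orientable handle absorbs two cross-caps) the set of canonical trees of the children stabilizes: for every split in which both children are at or above their limit genus the child tree is forced, and there the non-orientable child's tree depends only on the parity of its genus, which is the parity of $n$ and so is unchanged on passing to $\P^{n+2}$. The boundary splits, in which one child has small genus, match between $\P^n$ and $\P^{n+2}$ because the other, large-genus child is then at its limit with the same parity. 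One should also note that the orientation-changing moves II.A.(c), II.B.1.(c) and II.B.2.(b) each correspond to several moves (one per choice of boundary orientations), but only finitely many, so they contribute only finitely many thresholds to the maximum.

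I expect the splitting moves combined with the parity bookkeeping to be the main obstacle: one must verify, for every fixed way of distributing the boundaries, that the family of child canonical trees is the same on $\P^n$ and on $\P^{n+2}$, and this is exactly where it is essential that the non-orientable limit genus is parity-indexed rather than absolute. Everything else (kind I, the handle- and neck-breaking moves, and the outside moves) reduces, after the genus-shift-by-two matching described above, to a direct application of the induction hypothesis or of the orientable limit genus, and the collected thresholds then yield the desired $g'(\mathscr{R})$.
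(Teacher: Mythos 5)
Your proposal is correct and takes essentially the approach the paper intends: the paper states this proposition without giving a separate proof, noting only that the difference from the orientable case comes from the parity-dependent moves II.B.1.(c) and II.B.2.(b), and your induction-on-lives argument with genus-shift-by-two matching (non-orientable children handled by the parity-indexed hypothesis, orientable children by the previous proposition) is exactly the natural completion of the orientable proof it mirrors. The thresholds you collect, including the $c+d$ and $2c+d$ bounds for the splitting moves II.A.(b) and II.A.(c), correspond to the $a_i$, $b_j+1$, $c_k+d_k$, $e_l$ of the paper's orientable argument, so nothing needs correcting.
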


The difference with the property on orientable surfaces comes from moves of kind II.B.1.(c) and II.B.2.(b), that exist or not depending on the parity of $n$.

In the case of 11 starting spots on $\P^n$, we computed for $1\leq n\leq 8$ that the first player has a winning strategy if and only if $n$ is odd. We can thus conjecture that for this position, the winning player on $\P^n$ depends on the parity of the genus $n$. This property seems very rare, this is the simplest example that we know among thousands of positions.

\section{Results obtained with the program}

\subsection{Orientable surfaces}

We have computed the winning strategy of all starting positions up to 14 spots, with a genus up to 9. In all these cases, the winning player is the same as on the plane. Moreover, there is a stronger pattern with the nimber (see \cite{ww01} for a definition of this concept): the nimber of all these positions is 1 when the first player is winning. We can state the following conjecture, which is more general than the original one of \cite{ajs91}:

\begin{conj}
The nimber of a starting position with $n$ spots on an orientable surface is $0$ if $n=0$, $1$ or $2$ modulo $6$, and $1$ otherwise.
\end{conj}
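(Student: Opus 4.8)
The plan is to combine the structural reductions already established in the paper with Sprague--Grundy theory, and then to isolate the single feature that distinguishes orientable Sprouts from plane Sprouts. First I would note that, because every move is confined to one region and distinct regions evolve completely independently, a Sprouts position is a disjunctive sum of its region-games; hence its nimber is the nim-sum $\bigoplus_{\mathscr{R}}$ of the nimbers of its regions, and the nimber of a single region is the $\mathrm{mex}$ of the nimbers of its children. This recursion, together with the limit-genus Proposition in the orientable case, reduces the nimber of an $n$-spot starting position to a finite computation: beyond the limit genus the canonical tree, and therefore the nimber, is constant, so for each fixed $n$ only finitely many genera $\T^0,\dots,\T^{g}$ need be examined.

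The heart of the matter is to show that this constant value is independent of the surface and equals the plane value, in the sharp form nimber $=1$ for $n\equiv 3,4,5\pmod 6$ and $0$ otherwise. As the paper observes, the only move type that can produce a game unavailable on the plane is the handle-breaking move of kind II.B.1.(a), which sends a region $\T^m \to \T^{m-1}$ inside a single region. I would therefore attempt an induction on the number of lives, proving that in any region reachable from a starting position the set of children-nimbers produced with handle-breaking moves is already contained in the set produced by plane-type moves (kinds I and II.A.(a)); if so, the $\mathrm{mex}$ is unchanged and the nimber is forced to coincide with the plane value at every genus. The base of the induction is the Proposition on regions with three lives or less, which already gives surface-independence there, and the inductive step would feed on the bound $g(\mathscr{R})\le\max\{a_i;b_j+1;c_k+d_k;e_l\}$ to control the finitely many resulting regions.

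The hard part, and the reason the statement must remain a conjecture, is that the redundancy of handle-breaking moves fails for general positions: the paper itself exhibits the position of figure \ref{2A2B} and the position of figure \ref{graal}, whose canonical trees, hence nimbers, genuinely depend on the surface or on the parity of the genus. Consequently no purely local, region-by-region argument can succeed; any proof must exploit a global invariant of starting positions that survives the nim-sum decomposition, something analogous to the life-parity count underlying the $3p-1$ bound but refined to capture the mod-$6$ behaviour. Producing such an invariant, and separately establishing the exact value $1$ rather than merely a nonzero nimber in the first-player-win cases, is where I expect the argument to break down.

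Finally, even granting that the handle-breaking moves could be neutralised, the statement contains the original Conjecture 1 as its genus-$0$ special case, which is itself open beyond $p=32$. A complete proof would thus have to establish both the plane conjecture and its surface-stability simultaneously. In the present state of knowledge I would instead pursue the realistic goal of extending the exhaustive computation to larger $n$ and genus, using the limit-genus bound to keep the search finite, and treat the inductive program above as the natural route for whichever cases fall within reach.
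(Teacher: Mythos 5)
This statement is a conjecture in the paper, supported only by exhaustive computation (all starting positions up to 14 spots with genus up to 9) rather than by any proof, and your assessment agrees with that exactly: you correctly identify that handle-breaking moves of kind II.B.1.(a) genuinely change the nimbers of individual positions (figure \ref{2A2B}), that the genus-$0$ case already contains the open plane conjecture, and that the only viable route at present is the finite computation made possible by the limit-genus proposition. Since declining to prove it and falling back on limit-genus-bounded computation is precisely the paper's own approach, your proposal is essentially the same and needs no correction.
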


This result is not really surprising since only moves of kind II.B.1.(a) can change the game on a surface compared to the game on the plane. However, this result can't be deduced from a direct equivalence between plane and orientable surfaces. Indeed, there are a lot of positions whose nimber depends on the surface the game is played on. For example, the canonical tree drawn in solid lines of figure \ref{2A2B} corresponds to a nimber of 2, but with the part drawn in dotted lines, it becomes a nimber of 3. It means that the nimber of this position changes with the surface.

We show on figure \ref{nbo} other positions whose nimber is different on the plane and on another orientable surface. They are simple enough (5 or 6 lives) to be computed by hand. The first one on the left has a nimber of 2 if the outside region is a plane and of 4 for any other orientable surface. The nimbers of the second position are respectively 1 and 4, and 0 and 3 for the two last.

\begin{figure}[ht]
\centering
\includegraphics[scale=0.5]{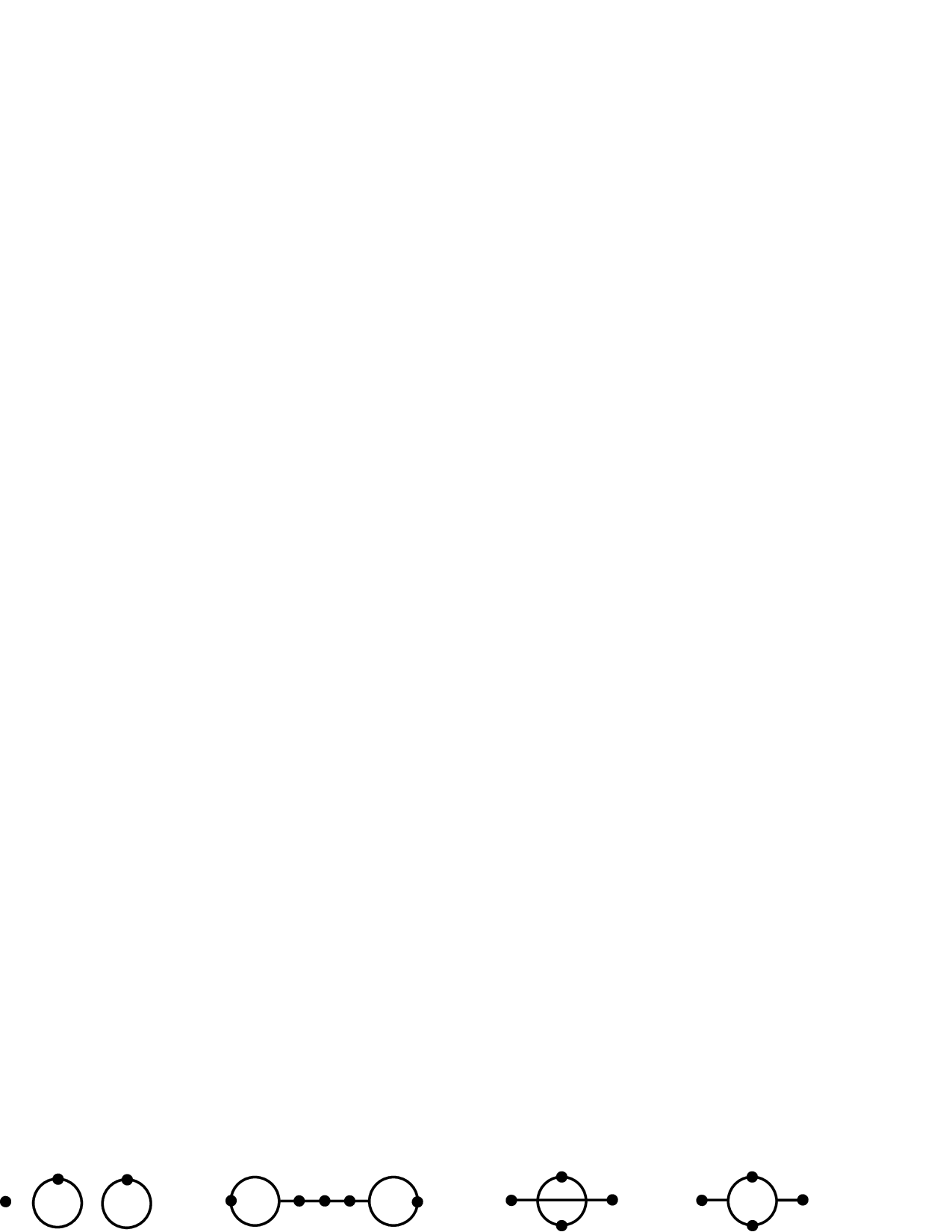}
\caption{Positions whose nimber changes on the torus}
\label{nbo}
\end{figure}

\subsection{Non-orientable surfaces}

The game on non-orientable surfaces shows much more significant differences with the plane. The game with 2 spots gives already a representative example: the second player has a winning strategy on non-orientable surfaces, whereas it was the first player on orientable surfaces.

\begin{figure}[ht]
\centering
\begin{tabular}{ |*{14}{c|} }
\hline
       & 2 & 3 & 4 & 5 & 6 & 7 & 8 & 9 & 10 & 11   & 12 & 13 & 14\tabularnewline
\hline
$\S$   & 0 & 1 & 1 & 1 & 0 & 0 & 0 & 1 & 1  & 1    & 0  & 0  & 0   \tabularnewline
\hline
$\P$   & 1 & 1 & 1 & 0 & 0 & 0 & 0 & 1 & 1  & 1    & 0  & 0  & 0   \tabularnewline
\hline
$\P^2$ & 1 & 1 & 1 & 0 & 0 & 0 & 1 & 1 & 1  & 0    & 0  & 0  & 0   \tabularnewline
\hline
$\P^3$ & 1 & 1 & 1 & 0 & 0 & 0 & 1 & 1 & 1  & $>3$ & 0  & 0  & $>2$\tabularnewline
\hline
$\P^4$ & 1 & 1 & 1 & 0 & 0 & 0 & 1 & 1 & 1  & 0    & 0  & 0  & $>0$\tabularnewline
\hline
$\P^5$ & 1 & 1 & 1 & 0 & 0 & 0 & 1 & 1 & 1  & $>1$ & 0  & 0  & $>2$\tabularnewline
\hline
$\P^6$ & 1 & 1 & 1 & 0 & 0 & 0 & 1 & 1 & 1  & 0    & 0  & 0  & $>0$\tabularnewline
\hline
$\P^7$ & 1 & 1 & 1 & 0 & 0 & 0 & 1 & 1 & 1  & $>1$ & 0  & 0  & $>2$\tabularnewline
\hline
$\P^8$ & 1 & 1 & 1 & 0 & 0 & 0 & 1 & 1 & 1  & 0    & 0  & 0  & $>0$\tabularnewline
\hline
\end{tabular}
\caption{Nimbers of starting positions with $p$ spots on $\P^n$}
\label{tablo}
\end{figure}

Figure \ref{tablo} shows the results we obtained with our program, for all starting positions from 2 to 14 spots, on $\P^n$ surfaces with genus $n$ between 0 and 8.

The first surprising result is the difference between the 2-spot game on $\S$, where the second player wins, and on $\P^n$ ($n\geq 1$), where the first player wins. For the 5-spot game, this is the exact contrary. And for the 8-spot game, the second player wins on $\S$ and $\P$, whereas the first player wins on $\P^n$ ($n\geq 2$).

Moreover, some starting positions have a nimber different from 0 and 1, the simplest of them being the positions with 11 starting spots on $\P^3$. However, the position is rather complicated, and we lacked time to compute its nimber. We could only compute that it is at least 4. 

As stated before, we can also notice that the winning player alternates in the case of 11 starting spots.

But contrary to the game on orientable surfaces, it seems difficult to conjecture any global pattern. Our current results are too partial to show any regularity, it it exists at all.

\subsection{Misere game}

We also computed the outcome of the mis\`ere version of the $n$-spot game, for $n\leq 11$. These results are given in the table \ref{tablo2}.

\begin{figure}[ht]
\centering
\begin{tabular}{ |*{14}{c|} }
\hline
       & 2 & 3 & 4 & 5 & 6 & 7 & 8 & 9 & 10 & 11 \tabularnewline
\hline
$\T^3$ & L & L & L & W & W & L & L & L & W  & W  \tabularnewline
\hline
$\T^2$ & L & L & L & W & W & L & L & L & W  & W  \tabularnewline
\hline
$\T$   & L & L & L & W & W & L & L & L & W  & W  \tabularnewline
\hline
$\S$   & L & L & L & W & W & L & L & L & W  & W  \tabularnewline
\hline
$\P$   & L & L & W & W & W & L & L & L & W  & W  \tabularnewline
\hline
$\P^2$ & L & L & W & W & W & L & L & W & W  & W  \tabularnewline
\hline
$\P^3$ & L & L & W & W & W & L & L & W & W  & W  \tabularnewline
\hline
$\P^4$ & L & L & W & W & W & L & L & W & W  & W  \tabularnewline
\hline
\end{tabular}
\caption{Outcome of mis\`ere starting positions}
\label{tablo2}
\end{figure}

\section{Conclusion}

The surprisingly regular pattern indicating the winning player on the plane seems to be extendable to orientable surfaces: the influence of the surface structure on the game seems too little to change the winning player.

On the contrary, theory as well as programming are much more complicated on non-orientable surfaces, leading to notable changes in the winning strategies. For a given starting position, the winning player is not always the same, and an interesting phenomenon appears on some positions: the winner can depend on the parity of the surface genus.

We have also proved that, for a given position, it is sufficient to solve the game up to a certain limit genus, in order to deduce which player has a winning strategy on any surface with a greater genus. For example, we solved by hand the case of the starting position with 2 spots on any orientable surface. This theory of the limit genus proves that studying completely a given position on any compact surface can be reduced to the study of a finite number of cases. An interesting extension of the program would be to compute the value--or an upper value--of this limit genus, so that we can state results true on any compact surface, as we did with the 2 spots case.

The program we used for computation is available with its source code on our web site http://sprouts.tuxfamily.org/ under a GNU license.

\subsection*{Thanks}

We would like to thank Jean-Francois Peltier and ``Ypercube'', who have described independently the kind of possible moves on compact surfaces, and who found a mistake in our own first description. We thank Jean-Paul Delahaye, who gave us the opportunity to continue our work on Sprouts. We also thank him and Dan Hoey for their corrections on this article.

\bibliography{surfaces}
\bibliographystyle{amsplain}

\end{document}